\newcommand{\citet}[2][]{\citeauthor{#2} \cite[#1]{#2}}
\newcommand{\Wiso}{W_{\mathrm{iso}}}
\newcommand{\Wvol}{W_{\mathrm{vol}}}
\renewcommand{\dd}{\displaystyle}
\renewcommand{\ddx}{\frac{\mathrm{d}}{\mathrm{d}x}}
\renewcommand{\ddy}{\frac{\mathrm{d}}{\mathrm{d}y}}
\renewcommand{\ddt}{\frac{\mathrm{d}}{\mathrm{d}t}}
\renewcommand{\ddz}{\frac{\mathrm{d}}{\mathrm{d}z}}
\newcommand{\lmax}{\lambda_{\mathrm{max}}}
\newcommand{\lmin}{\lambda_{\mathrm{min}}}
\newcommand{\Wliniso}{W_\mathrm{iso}^\mathrm{lin}}
\newcommand{\Wlinvol}{W_\mathrm{vol}^\mathrm{lin}}
\renewcommand{\Wlin}{W^\mathrm{lin}}
\newcommand{\sigmaliniso}{\sigma_\mathrm{iso}^\mathrm{lin}}
\newcommand{\sigmalinvol}{\sigma_\mathrm{vol}^\mathrm{lin}}
\newcommand{\sigmalin}{\sigma^\mathrm{lin}}
\DeclareMathOperator{\arcosh}{arcosh}
\newcommand{\WSL}{W_{\mathrm{SL}}}
\begin{document}
\title{\vspace{-2cm}Sharp rank-one convexity conditions in planar isotropic elasticity for the additive volumetric-isochoric split}
\date{\today}
\author{%
	Jendrik Voss\thanks{%
		Corresponding author: Jendrik Voss,\quad Lehrstuhl f\"{u}r Nichtlineare Analysis und Modellierung, Fakult\"{a}t f\"{u}r Mathematik, Universit\"{a}t Duisburg-Essen, Thea-Leymann Str. 9, 45127 Essen, Germany; email: max.voss@uni-due.de%
	}\quad and \quad
	Ionel-Dumitrel Ghiba\thanks{%
	Ionel-Dumitrel Ghiba, Alexandru Ioan Cuza University of Ia\c si, Department of Mathematics, Blvd. Carol I, no. 11, 700506 Ia\c si, Romania; Octav Mayer Institute of Mathematics of the Romanian Academy, Ia\c si Branch, 700505 Ia\c si; and Lehrstuhl f\"{u}r Nichtlineare Analysis und Modellierung, Fakult\"{a}t f\"{u}r Mathematik, Universit\"{a}t Duisburg-Essen, Thea-Leymann Str. 9, 45127 Essen, Germany, email: dumitrel.ghiba@uni-due.de, dumitrel.ghiba@uaic.ro%
	}\quad and\quad
	Robert J.\ Martin\thanks{%
		Robert J.\ Martin,\quad Lehrstuhl f\"{u}r Nichtlineare Analysis und Modellierung, Fakult\"{a}t f\"{u}r Mathematik, Universit\"{a}t Duisburg-Essen, Thea-Leymann Str. 9, 45127 Essen, Germany; email: robert.martin@uni-due.de%
	}\\ and\quad%
	Patrizio Neff\thanks{%
		Patrizio Neff,\quad Head of Lehrstuhl f\"{u}r Nichtlineare Analysis und Modellierung, Fakult\"{a}t f\"{u}r	Mathematik, Universit\"{a}t Duisburg-Essen, Thea-Leymann Str. 9, 45127 Essen, Germany, email: patrizio.neff@uni-due.de%
		}
}
\maketitle
\vspace*{-0.5cm}
\begin{abstract}
\noindent
	We consider the volumetric-isochoric split in planar isotropic hyperelasticity and give a precise analysis of rank-one convexity criteria for this case, showing that the Legendre-Hadamard ellipticity condition separates and simplifies in a suitable sense. Starting from the classical two-dimensional criterion by Knowles and Sternberg, we can reduce the conditions for rank-one convexity to a family of one-dimensional coupled differential inequalities. In particular, this allows us to derive a simple rank-one convexity classification for generalized Hadamard energies of the type $W(F)=\frac{\mu}{2}\.\frac{\norm{F}^2}{\det F}+f(\det F)$; such an energy is rank-one convex if and only if the function $f$ is convex.
\end{abstract}
{\textbf{Key words:} nonlinear elasticity, finite isotropic elasticity, constitutive inequalities, hyperelasticity, rank-one convexity, loss of ellipticity, Legendre-Hadamard condition, volumetric-isochoric split, separate convexity, Baker-Ericksen inequality}
\\[.65em]
\noindent\textbf{AMS 2010 subject classification: 
	26B25, 
	26A51, 
	74A10, 
	74B20  
}\\[-0.5cm]
{\parskip=-0.5mm \tableofcontents}
\section{Introduction}
Undoubtedly, one of the most important constitutive requirements in nonlinear elasticity is the rank-one convexity of the elastic energy $W\col\GLp(n)\to\R$. For $C^2$-smooth energy functions, rank-one convexity is nothing else than the Legendre-Hadamard ellipticity condition expressing the ellipticity of the Euler-Lagrange equations
\begin{align}
	\Div DW(\nabla\varphi)=0\qquad\text{associated to the variational problem}\qquad\int_\Omega W(\nabla\varphi)\.\dx\to\min.
\end{align}
Thus, rank-one convexity for smooth energies amounts to the requirement
\begin{align}
	D_F^2W(F).(\xi\otimes\eta,\xi\otimes\eta)\geq c\,\abs{\xi}^2\.\abs{\eta}^2,\qquad c\geq 0\qquad\text{for all }\;\xi,\eta\in\R^n\setminus\{0\}\label{eq:legendreHadamard}
\end{align}
for all $F\in\GLpn$. This condition is equivalent to the requirement that the acoustic tensor $Q(F,\eta)\in\Sym(n)$ with $Q_{ik}(F,\eta)=\sum_{j,l=1}^n\frac{\partial^2W(F)}{\partial  F_{ij}\partial F_{kl}}\cdot\eta_j\eta_l$ is positive semi-definite for all directions $\eta\in\R^n$, where
\begin{align}
	\iprod{\xi,Q(F,\eta).\xi}_{\R^{n\times n}}=D_F^2W(F).(\xi\otimes\eta,\xi\otimes\eta).
\end{align}
For the dynamic problem, a non-negative acoustic tensor means that the system is hyperbolic \cite{gavrilyuk2016example,ndanou2014criterion}. When the dynamic equation is linearized at $F\in\GLp(n)$, then there exists a traveling wave solution with real wave speeds if and only if the problem is Legendre-Hadamard elliptic at $F$ \cite{silhavy1997mechanics}. Moreover, the static response is metastable (stable against infinitesimal perturbations) under Legendre-Hadamard ellipticity (if $c>0$ in \eqref{eq:legendreHadamard}). The loss of ellipticity is generally related to instability phenomena (separation into arbitrary fine phase mixtures \cite{silhavy1997mechanics,grabovsky2018rank}, shear banding) and possible discontinuous equilibrium solutions \cite{rosakis1990ellipticity}.

Checking rank-one convexity for a given nonlinear elastic material can be quite challenging, see e.g.\ \cite{agn_martin2018non,bertram2007rank,pedregal2018does,agn_neff2014loss} and Appendix \ref{appendix:dumitrel}, although John Ball's polyconvexity \cite{ball1976convexity} as an easy-to-verify sufficient condition can often be helpful \cite{agn_schroder2010poly,agn_martin2018polyconvex}. A certain simplification occurs in the isotropic case. Indeed, any isotropic energy on $\GLp(n)$ admits a representation in terms of the singular values $W(F)=g(\lambda_1,\cdots,\lambda_n)$, where $g\col(0,\infty)^n\to\R$ is permutation invariant. In planar isotropic hyperelasticity, Knowles and Sternberg have provided a criterion for rank-one convexity in terms of $g$, conclusively reducing the problem to the singular values representation \cite{knowles1976failure,knowles1978failure,knowles1975ellipticity,silhavy2002n,davies1991simple}, see also \cite{dacorogna01,de2012note,simpson1983copositive,zubov1992effective,parry1995planar,vsilhavy2003so}. In the Knowles-Sternberg result, a family of inequality constraints in terms of first and second derivatives of $g$ (including the Baker-Ericksen inequality \cite{bakerEri54}) has to be checked, which can still be daunting in practice. 

The ellipticity condition in the planar and isotropic case has also been reformulated in terms of yet different representations, see e.g.\ \cite{hamburger1987characterization,de2012note,rosakis1994relation}, always in the form of a set of differential inequality constraints involving the used special representation. Generalizations to the case of functions defined not on $\GLp(2)$ but on $\R^{2\times 2}$ have been obtained by Aubert \cite{aubert1995necessary} and Hamburger \cite{hamburger1987characterization}. For some special families of energy functions more detailed information is available. For example, in the fluid-like case \cite{Dacorogna08}
\begin{align}
	W(F)=f(\det F) \qquad\text{with }\; f\col(0,\infty)\to\R\,,\qquad\text{$W$ is rank-one convex if and only if $f$ is convex.} 
\end{align}
Moreover, for the Hadamard-material \cite[Theorem 5.58ii)]{Dacorogna08,ball1984w1} (see also \cite{dantas2006equivalence})
\begin{align}
	&W\col\GLp(n)\to\R,\qquad W(F)=\norm{F}^\alpha+f(\det F)\qquad \text{with }\;1\leq\alpha<2n\,,\\
	&W \text{ polyconvex}\quad\iff\qquad W\text{ quasiconvex}\quad\iff\quad W\text{ rank-one convex}\quad\iff\quad f\text{ convex}.\notag
\end{align}
In a recent contribution \cite{agn_martin2015rank,agn_martin2019envelope}, we have been able to classify all planar, isotropic rank-one convex functions $W\col\GLp(2)\to\R$ which are isochoric (conformally invariant), i.e.\ satisfy 
\begin{align}
	 W(A\.F\.B)=W(F)\qquad\text{for all}\quad A,B\in\left\{a\.R\in\GLp(2)\;|\;a\in(0,\infty)\,,\;R\in\SO(2)\right\}.
\end{align}
For these energies we have the simple characterization\footnote{The isotropy of the energy implies $h(t)=h\big(\frac{1}{t}\big)$ for all $t\in(0,\infty)$, cf.\ equation \eqref{eq:definitionOfh}.}
\begin{align}
	W(F)=h\bigg(\frac{\lambda_1}{\lambda_2}\bigg)\,,\quad h\col(0,\infty)\to\R\,,\qquad\text{then $W$ is rank-one convex if and only if $h$ is convex,} 
\end{align}
where $\lambda_1,\lambda_2>0$ are the singular values of $F$. Similarly, rank-one convexity in isotropic planar incompressible elasticity $W\col\SL(2)\to\R$ can be easily checked \cite{agn_ghiba2018rank}: for an energy of the form
\begin{align}
\label{eq:incompressibleEnergyRepresentation}
	W(F)=\phi\left(\lmax-\frac{1}{\lmax}\right) \qquad\text{with }\;\phi\col[0,\infty)\to\R\,,\qquad\begin{array}{l}
		\text{$W$ is rank-one convex if and only if}\\\text{{$\phi$ is convex and nondecreasing.} }
	\end{array}
\end{align}
These results also allow for an explicit calculation of the quasiconvex relaxation for conformally invariant and incompressible isotropic planar hyperelasticity \cite{agn_martin2019envelope,agn_martin2019quasiconvex}.
%
%
%
%

\subsection{The volumetric-isochoric split}
In isotropic linear elasticity, the quadratic elastic energy $\Wlin$ and the linear Cauchy stress tensor $\sigmalin$ can be uniquely represented as
\begin{align}
	\Wlin(\nabla u)&=\mu\.\norm{\sym\nabla u}^2+\frac{\lambda}{2}\.(\tr\nabla u)^2=\mu\.\norm{\dev_n\sym\nabla u}^2+\frac{\kappa}{2}\.(\tr\nabla u)^2\,,\notag\\
	\sigmalin&=D\Wlin(\nabla u)=2\mu\.\dev_n\sym\nabla u+\kappa\.\tr(\nabla u)\.\id\,,\label{eq:linEla}
\end{align}
where $\mu,\lambda $ are the Lam\'e-constants, $\kappa$ is the bulk-modulus, $\nabla u$ denotes the displacement gradient and $\dev_n X=X-\frac{1}{n}\tr(X)\cdot\id$ is the deviatoric part of $X\in\Rnn$, cf.\ Appendix \ref{appendix:lin}. The right hand side of the energy is automatically additively separated into pure infinitesimal shape change and infinitesimal volume change, respectively, with a similar additive split of the Cauchy stress tensor into a deviatoric part and a spherical part, depending only on the shape change and volumetric change, respectively.

A natural finite strain extension of \eqref{eq:linEla} is given by the additive volumetric-isochoric split \cite{horgan2009volumetric}, i.e.\ by assuming the energy potential $W$ to have the form
\begin{align}
	W(F)=\Wiso\bigg(\frac{F}{\sqrt[n]{\det F}}\bigg)+\Wvol(\det F)\,,
\end{align}
where $\Wiso$ is isochoric (conformally invariant), since $\Wiso(s\.F)=\Wiso\big(\frac{s\.F}{\sqrt[n]{\det(s\.F)}}\big)=\Wiso\big(\frac{F}{\sqrt[n]{\det F}}\big)$ for all $s>0$. Here, again, the energy contribution is additively split into the isochoric part taking only the shape change into account (depending only on $\frac{F}{\sqrt[n]{\det F}}$), and a part penalizing only the volumetric extension in $\det F$. Note the multiplicative decomposition
\begin{align}
	F=\frac{F}{\sqrt[n]{\det F}}\cdot\sqrt[n]{\det F}\cdot\id\,,\qquad\det\left(\frac{F}{\sqrt[n]{\det F}}\right)=1
\end{align}
of the deformation gradient $F$ itself into volume preserving and shape changing part \cite{richter1948,agn_graban2019richter,flory1961thermodynamic,sidoroff1974modele,simo1988framework,ogden1978nearly,federico2010volumetric,murphy2018modelling}. Such type of energy functions are often used when modeling slightly compressible material behavior 
\cite{Ciarlet1988,agn_hartmann2003polyconvexity,favrie2014thermodynamically,ogden1978nearly,charrier1988existence} or when otherwise no detailed information on the actual response of the material is available \cite{simo1988framework,gavrilyuk2016example,ndanou2017piston}. In the nonlinear regime, this split is not a fundamental law of nature for isotropic bodies (as in the linear case) but rather introduces a convenient form of the energy.
Formally, this split can also be generalized to the anisotropic case, where it shows, however, some severe deficiencies \cite{federico2010volumetric,murphy2018modelling} from a modeling point of view.\footnote{For example, a perfect sphere made of an anisotropic material and subject only to all around pressure would remain spherical for volumetric-isochorically decoupled energies.} 

It has been first shown by Richter \cite{richter1948,agn_graban2019richter,richter1949verzerrung,agn_neff2020axiomatic} (see also Flory \cite{flory1961thermodynamic} and Sansour \cite{sansour2008physical}) that the accompanying symmetric nonlinear Kirchhoff stress tensor $\tau=\det F\cdot\sigma$ (but not the Cauchy stress as repeatedly claimed in \cite{ndanou2014criterion,ndanou2017piston}) admits a similar additive structure in the sense that
\begin{align}
	\tau(F)=\dev\tau+\frac{1}{n}\.\tr(\tau)\cdot\id=\tau_{\rm iso}+\tau_{\rm vol}\.\id\,,\qquad\tau_{\rm vol}=\det F\cdot\Wvol'(\det F)\label{eq:kirchhoff}
\end{align}
in which the deviatoric part of the Kirchhoff stress only depends on $\Wiso$ and the spherical part only depends on $\Wvol$. A typical example of the foregoing volumetric-isochoric format is the geometrically nonlinear quadratic Hencky energy \cite{agn_neff2015geometry,Hencky1928,sendova2005strong,agn_martin2018polyconvex} 
\begin{align}
	W_{\rm H}(F)&=\mu\.\norm{\dev_n\log V}^2+\frac{\kappa}{2}\left(\tr\log V\right)^2,\qquad V=\sqrt{F\.F^T}\label{eq:hencky}\\
	\tau_{\rm H}&=2\mu\.\dev_n\log V+\kappa\.\tr(\log V)\cdot\id=2\mu\.\log\frac{V}{\sqrt[3]{\det V}}+\kappa\.\log\det F\cdot\id\notag
\end{align}
as well as its physically nonlinear extension, the exponentiated Hencky-model \cite{agn_neff2015exponentiatedI,agn_neff2014exponentiatedIII}
\begin{align}
	W_{\rm eH}&=\frac{\mu}{k}\.e^{k\.\norm{\dev_n\log V}^2}+\frac{\kappa}{2\khat}\.e^{\khat\left(\tr\log V\right)^2}\,,\label{eq:expHencky}\\
	\tau_{\rm eH}&=2\mu\.e^{k\.\|\dev_n\log\,V\|^2}\cdot\dev_n\log V+\underbrace{\kappa\.e^{\khat\left(\tr\log V\right)^2}\.\tr(\log V)\cdot\id}_{=\kappa\.e^{\khat\left(\log\det F\right)^2}\.\log\det F\cdot\id}\,,\notag
\end{align}
which has been used for the stable computation of the inversion of tubes \cite{agn_nedjar2018finite}, the modeling of tire-derived material \cite{agn_montella2015exponentiated} or applications in bio-mechanics \cite{agn_schroder2018exponentiated}. While it is well known that \eqref{eq:hencky} and \eqref{eq:expHencky} are overall not rank-one convex \cite{agn_neff2015exponentiatedI}, there does not exist any elastic energy depending on $\norm{\dev_n\log V}^2$, $n\geq 3$ that is rank-one convex \cite{agn_ghiba2015ellipticity}. The situation is surprisingly completely different for the planar case. Indeed, \eqref{eq:expHencky} for $n=2$ is not only rank-one convex, but also polyconvex \cite{agn_neff2015exponentiatedII,agn_ghiba2015exponentiated} if $k\geq\frac{1}{8}$.

Because of this counter-intuitive result when descending from three-dimensions to two-dimensions, we became interested in the precise qualitative nature of the volumetric-isochoric split in the planar case with respect to rank-one convexity.\footnote{The three-dimensional case of volumetric-isochoric split energies on $\GLp(3)$ has previously been considered in \cite{ndanou2014criterion}, where the authors propose involved sufficient criteria for rank-one convexity.} In the planar isotropic case (exclusively), the isochoric energy part $\Wiso$ admits a number of equivalent representations, e.g.
\begin{align*}
	W(F)=\Wiso\bigg(\frac{F}{\sqrt[n]{\det F}}\bigg)+\Wvol(\det F)=\widehat\psi\big(\norm{\dev_2\log U}^2\big)+\fhat\big(\tr(\log U)\big)\,.
\end{align*}
In particular, for planar additively volumetric-isochoric split energies we can uniquely write without loss of generality \cite[Lemma 3.1]{agn_martin2015rank}
\begin{align}
	W(F)=h\bigg(\frac{\lambda_1}{\lambda_2}\bigg)+f(\lambda_1\lambda_2)\,,\qquad h,f\col(0,\infty)\to\R\,,\quad h(t)=h\bigg(\frac{1}{t}\bigg)\qquad\text{for all }\;t\in(0,\infty)\,,\label{eq:definitionOfh}
\end{align}
where $\lambda_1,\lambda_2>0$ denote the singular values of $F$ and $h,f$ are given functions. It is now tempting to believe that the rank-one convexity conditions on $\widehat\psi$ and $\fhat$, respectively on $\Wiso$ and $\Wvol$ also allow for a sort of split (like for the Kirchhoff stress tensor in \eqref{eq:kirchhoff}) and can be reduced to separate statements on $h$ and $f$. However, this is not even the case in planar linear elasticity, where $\Wlin$ from \eqref{eq:linEla} is rank-one convex in the displacement gradient $\nabla u$ if and only if $\mu\geq 0$ and $\mu+\kappa\geq 0$, see Appendix \ref{appendix:lin}. This means that rank-one convexity of $\Wlin$ implies that $\Wliniso$ is rank-one convex, whereas $\Wlinvol$ might not be rank-one convex. We therefore need to expect some coupling in the conditions for $h$ and $f$. 

Our main result in this respect (Theorem \ref{theorem:main}) can be summarized as follows: If $W$ is altogether rank-one convex then, either $\Wiso$ or $\Wvol$ must be rank-one convex (i.e.\ $h$ or $f$ are convex), but contrary to isotropic linear elasticity, $\Wiso$ may also be non rank-one convex (see the example in Section \ref{sec:examples}). Although the conditions for $h$ and $f$ do not decouple completely, it is possible to reduce rank-one convexity to a family of coupled one-dimensional conditions.
 
The paper is now structured as follows. After a short introduction and visualization of rank-one convexity on $\GLp(2)$, we start with criteria by Knowles and Sternberg for the arbitrary objective-isotropic case on the two-dimensional set of the singular values $\lambda_1,\lambda_2>0$ of $F$. Lemma \ref{lemma:voliso} gives the corresponding conditions for the volumetric-isochoric split. The main Theorem \ref{theorem:main} shows that it is indeed possible to reduce the conditions to a coupled family of one-dimensional differential inequalities which makes testing for rank-one convexity much more convenient in the volumetric-isochoric split case. Apart from a list of additional necessary conditions, we derive a simple classification for generalized Hadamard energies and give two examples of non-trivial rank-one convex energies for which $\Wiso$ and $\Wvol$, respectively, are not rank-one convex. Finally, we show how criteria for the invertibility of the Cauchy stress response are also simplified in the volumetric-isochoric split case and highlight the connection of this invertibility property to rank-one convexity.
%
%
%
%
\section[\boldmath Rank-one convexity conditions on $\GLp(2)$]{\boldmath Rank-one convexity conditions on $\GLp(2)$}

We first recall the following basic definitions.

\begin{definition}
	An energy function $W\col\Rnn\to\R$ is called \emph{rank-one convex} if the mapping
	\begin{align}
		t\mapsto W(F+t\.\xi\otimes\eta)\qquad\text{is convex in }t\text{ for all } F\in\R^{n\times n}\text{ and }\xi,\eta\in\R^n\,.
	\end{align}
\end{definition}
\begin{definition}
	An energy function $W\in C^2(\Rnn;\R)$ is called \emph{Legendre-Hadamard elliptic} if
	\begin{align}
		D^2 W(F).[\xi\otimes\eta,\xi\otimes\eta]\geq 0\qquad\text{for all } F\in\R^{n\times n}\text{ and }\xi,\eta\in\R^n
	\end{align}
	which is equivalent to rank-one convexity.
\end{definition}
Throughout the following, we will use the identification $W(F)=g(\lambda_1,\lambda_2)$ for any isotropic planar energy function $W\col\GLp(2)\to\R$, where $\lambda_1,\lambda_2>0$ are the singular values of $F$, i.e.\ the eigenvalues of $\sqrt{F^TF}$, and the function $g\col(0,\infty)^2\to\R$ is permutation invariant.
%
%
%
%
\subsection{Ellipticity domains for some nonlinear energy functions}
For an isotropic energy function $W\col\GLp(2)\to\R$, it is possible to visualize the ellipticity domain in terms of the singular values $\lambda_1,\lambda_2>0$ of $F$, as shown in Figure \ref{fig:notRankOneconvex}. The isotropy of $W$ ensures the symmetry at the bisector, i.e.\ $W(\lambda_1,\lambda_2)=W(\lambda_2,\lambda_1)$. For purely isochoric energies $W(F)=\Wiso\big(\frac{F}{\sqrt{\det F}}\big)=h\big(\frac{\lambda_1}{\lambda_2}\big)$, the ellipticity domain consists only of cones and can therefore be reduced to a one-dimensional problem at $\det F=1$ or any other fixed determinant. Of course, this is not the case for energies with an arbitrary volumetric part $f(\lambda_1\cdot\lambda_2)$.
\begin{figure}[h!]
	\begin{minipage}[t]{.49\linewidth}
  		\centering
	 	\includegraphics[width=\textwidth]{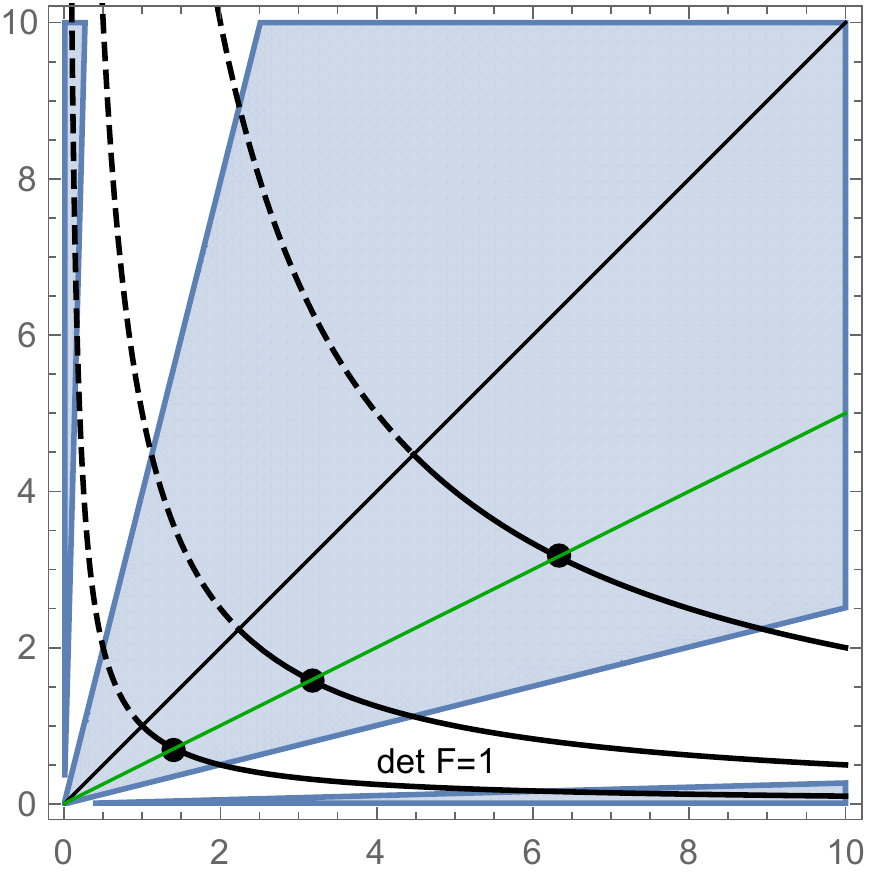}
	 \end{minipage}%
  	 \hfill%
	 \begin{minipage}[t]{.49\linewidth}
	   \centering
	   \includegraphics[width=\textwidth]{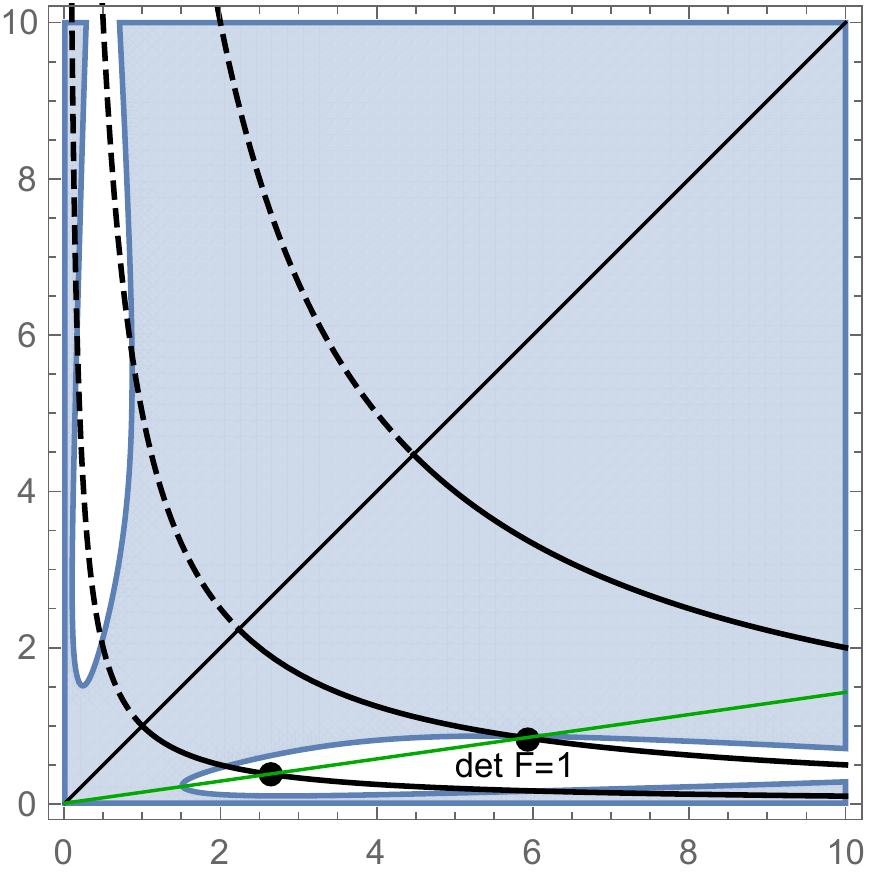}
	\end{minipage}
  \caption{\label{fig:notRankOneconvex} The ellipticity domain in terms of the singular values $\lambda_1,\lambda_2>0$ of the exponentiated Hencky-model with $k<\frac{1}{8}$. \ Left: Only the non rank-one convex isochoric part $h(t)=e^{\frac{1}{10}\.(\log t)^2}$. Rank-one convexity at one point on the curve $\det F=\lambda_1\lambda_2=c$ (black) for arbitrary $c>0$ implies rank-one convexity at the straight line (green) connecting the origin to this point. \ Right: For additive coupling with the volumetric part $f(z)=\frac{1}{1000}\left(z-\frac{1}{z}\right)^2$, the above invariance is lost for non-isochoric energies.}
\end{figure}
%
%
%
%
\subsection{The classical Knowles and Sternberg ellipticity criterion}
For rank-one convexity on $\GLp(2)$ Knowles and Sternberg, cf.\ \cite[p.~318]{silhavy1997mechanics}) established the following important and useful criterion.
\begin{lemma}[{Knowles and Sternberg, cf.\ \cite[p.~308]{silhavy1997mechanics} see also \cite{parry1995planar,dacorogna01,vsilhavy2002convexity,vsilhavy1999isotropic,aubert1987faible,aubert1995necessary}}]
\label{lemma:knowlesSternberg}
	Let $W\col\GLp(2)\to\R$ be an objective-isotropic function of class $C^2$ with the representation in terms of the singular values of the deformation gradient $F$ given by $W(F)=g(\lambda_1,\lambda_2)$, where $g\in C^2((0,\infty)^2,\R)$. Then $W$ is rank-one convex if and only if the following five conditions are satisfied:
	\begin{itemize}
		\item[i)] $\dd\underbrace{g_{xx}\geq 0\quad\text{and}\quad g_{yy}\geq 0}_{\text{\normalfont\enquote{TE-inequalities} (separate convexity)}}\qquad \text{for all}\quad x,y\in (0,\infty)$\,,
		\item[ii)] \qquad$\dd\underbrace{\frac{x\.g_x-y\.g_y}{x-y}\geq 0}_{\text{\normalfont\enquote{BE-inequalities}}}\qquad\text{for all}\quad x,y\in (0,\infty)\,,\; x\neq y$\,,
		\item[iii)] \qquad$\dd g_{xx}-g_{xy}+\frac{g_x}{x}\geq 0\quad\text{and}\quad g_{yy}-g_{xy}+\frac{g_y}{y}\geq 0\qquad\text{for all}\quad x,y\in (0,\infty)\,,\; x=y$\,,
		\item[iv)]\qquad$\dd\sqrt{g_{xx}\,g_{yy}}+g_{xy}+\frac{g_x-g_y}{x-y}\geq 0\qquad\text{for all}\quad x,y\in (0,\infty)\,,\; x\neq y$\,,
		\item[v)]\qquad$\dd\sqrt{g_{xx}\,g_{yy}}-g_{xy}+\frac{g_x+g_y}{x+y}\geq 0\qquad\text{for all}\quad x,y\in (0,\infty)$\,.
	\end{itemize}
	Furthermore, if all the above inequalities are strict, then $W$ is strictly rank-one convex.	
\end{lemma}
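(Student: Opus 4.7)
The plan is to derive the five Knowles--Sternberg conditions by specializing the Legendre--Hadamard inequality $D^2W(F).[\xi\otimes\eta,\xi\otimes\eta]\geq 0$ at a convenient representative of each isotropy orbit. First I would exploit objective-isotropy to replace $F$ with a diagonal matrix: if $F=R\.D\.Q^T$ with $R,Q\in\SO(2)$ and $D=\mathrm{diag}(x,y)$ the diagonal matrix of singular values $x,y>0$, then the substitutions $\xi\leftarrow R^T\xi$ and $\eta\leftarrow Q^T\eta$ preserve the rank-one structure of $\xi\otimes\eta$ and transform $D^2W(F)$ into $D^2W(D)$, so it suffices to test rank-one convexity at diagonal arguments $D$ and for arbitrary $\xi,\eta\in\R^2$.

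Next I would compute $D^2W(D).[\xi\otimes\eta,\xi\otimes\eta]$ in closed form in terms of $g$ and its partial derivatives. Setting $a=\xi_1\eta_1$, $d=\xi_2\eta_2$, $b=\xi_1\eta_2$, $c=\xi_2\eta_1$, the computation separates into a block depending only on $(a,d)$ that carries the classical second derivatives $g_{xx},g_{yy},g_{xy}$, and a block depending only on $(b,c)$ that carries the first derivatives of $g$ through the Baker--Ericksen-type expressions $\frac{x\.g_x-y\.g_y}{x-y}$ and $\frac{g_x+g_y}{x+y}$. The latter arise because off-diagonal perturbations of $D$ alter the singular values only at second order, so their contribution is governed by first derivatives of $g$ coupled to the infinitesimal rotation generated by the perturbation. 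The degenerate case $x=y$ is handled by taking L'H\^opital-type limits which replace the singular denominators by $g_{xx}-g_{xy}+g_x/x$ and $g_{yy}-g_{xy}+g_y/y$; this is precisely how condition iii) enters.

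The core analytic step, and the main obstacle, is to show that non-negativity of the full quadratic form on rank-one matrices, i.e.\ under the constraint $a\.d=b\.c$, is equivalent to all five listed inequalities holding simultaneously. For necessity I would probe along four families of rank-one tensors: axis-aligned tensors of the form $e_i\otimes e_i$ give i); pure-shear tensors combined with the $x\leftrightarrow y$ symmetry give ii); mixed tensors with $b=\pm c$ combined with optimization in $(a,d)$ produce the discriminant-type conditions iv) and v); and the collapse $x\to y$ yields iii) by continuity. For sufficiency I would write the quadratic form as a sum of manifestly non-negative contributions, bounding cross terms via the arithmetic--geometric estimate $g_{xx}a^2+g_{yy}d^2\geq 2\sqrt{g_{xx}g_{yy}}\,|a\.d|$ and then using the rank-one constraint $|a\.d|=|b\.c|$ to absorb into the $(b,c)$-block controlled by ii), iii), v); the sign choice in this AM--GM step is exactly what distinguishes iv) from v). Since $W\in C^2$, rank-one convexity and Legendre--Hadamard ellipticity coincide, and strict inequalities propagate through every step of the reduction, so the strict case follows without additional work.
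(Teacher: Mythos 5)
The paper does not prove this lemma at all: it is imported verbatim as a classical result of Knowles and Sternberg, with the proof delegated to the cited literature (\v{S}ilhav\'y's book, Dacorogna, etc.), so there is no in-paper argument to compare yours against. That said, your outline reproduces the standard textbook proof correctly in its essential structure: reduction to diagonal $F=\diag(x,y)$ via objectivity and isotropy; the explicit block form of $D^2W(D).[H,H]$ at a diagonal argument, with the $(H_{11},H_{22})$-block carrying $g_{xx},g_{yy},g_{xy}$ and the $(H_{12},H_{21})$-block carrying the divided differences $\frac{x\,g_x-y\,g_y}{x^2-y^2}$ and $\frac{y\,g_x-x\,g_y}{x^2-y^2}$ (whose sum and difference are exactly $\frac{g_x-g_y}{x-y}$ and $\frac{g_x+g_y}{x+y}$ appearing in iv) and v)); and the constrained minimization over the rank-one cone $H_{11}H_{22}=H_{12}H_{21}$, where the AM--GM step with the two sign choices produces precisely the pair iv)/v), condition ii) emerges as the coercivity requirement for the off-diagonal block, and iii) is the continuous extension of ii) to $x=y$. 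The only caution I would register is that the sufficiency direction is asserted rather than executed: the minimization of the quadratic form subject to $ad=bc$ requires a genuine case analysis (attainment of the AM--GM bound at $b=\pm c$ and $a^2 g_{xx}=d^2 g_{yy}$, the degenerate cases $g_{xx}=0$ or $g_{yy}=0$, and the limit $x\to y$ where the divided differences become singular), and this is where most of the work in the classical proof actually lives. As a blind proposal your strategy is sound and matches the literature; it simply is not the route the paper takes, because the paper takes none.
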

This criterion has recently been generalized to plane strain orthotropic materials \cite{aguiar2019strong}, see also \cite{walton2003sufficient,agn_merodio2006note}. Hamburger \cite{hamburger1987characterization} gives an extension to $\R^{2\times 2}$ instead of $\GLp(2)$ in the same format. In this paper, we restrict ourselves to the class of two-dimensional isotropic energy functions \mbox{$W\col\GLp(2)\to\R$} with additive volumetric-isochoric split
\begin{align}
	W(F)=\Wiso\bigg(\frac{F}{\sqrt{\det F}}\bigg)+\Wvol(\det F)=h\bigg(\frac{\lambda_1}{\lambda_2}\bigg)+f(\lambda_1\lambda_2)\label{eq:VolIso}
\end{align}
as introduced in \eqref{eq:definitionOfh}, where $\lambda_1,\lambda_2\in(0,\infty)$ are the singular values of $F$ and $h\col(0,\infty)\to\R$ satisfies $h(t)=h\big(\frac{1}{t}\big)$ for all $t\in(0,\infty)$.\footnote{%
	The latter requirement is due to the invariance of the singular value representation $g(x,y)=h\big(\frac{x}{y}\big)+f(x\cdot y)$ under permutation of the arguments, which implies $h\big(\frac{x}{y}\big)=h\big(\frac{x}{y}\big)$ for all $x,y>0$.
}
Note that the case $W(F)=+\infty$ is excluded by definition.

Starting with a rank-one convex energy function $W$ with additive volumetric-isochoric split, it is not clear whether the separate parts $\Wiso$ and $\Wvol$ need to be rank-one convex, too. Indeed, subsequently we will see that this is not always the case.
%
%
%
%
\subsection{Necessary and sufficient conditions for the planar volumetric-isochoric split}
The following Lemma expresses the Knowles-Sternberg criterion (Lemma \ref{lemma:knowlesSternberg}) in terms of the functions $h$ and $f$ for the specific form $g(x,y)=h\big(\frac{x}{y}\big)+f(x\cdot y)$ of $g$ in the case of an additive volumetric-isochoric split.
\begin{lemma}\label{lemma:voliso}
	Let $W\col\GLp(2)\to\R$ be an objective-isotropic function of class $C^2$ with additive volumetric-isochoric split, with the representation in terms of the singular values of the deformation gradient $F$ given by $W(F)=g(\lambda_1,\lambda_2)=h\big(\frac{\lambda_1}{\lambda_2}\big)+f(\lambda_1\lambda_2)$, where $f,h\in C^2((0,\infty),\R)$ and\.%
	\footnote{%
		The isotropy of the energy implies the symmetry under permutation of the two singular values $\lambda_1$ and $\lambda_2$, thus
	\[
		h(t)=h\bigg(\frac{\lambda_1}{\lambda_2}\bigg)=h\bigg(\frac{\lambda_2}{\lambda_1}\bigg)=h\bigg(\frac{1}{t}\bigg)\qquad\text{for all }t\in(0,\infty)\,.
	\]%
	}
	$h(t)=h\big(\frac{1}{t}\big)$ for all $t\in(0,\infty)$. Then $W$ is rank-one convex if and only if the following four conditions are satisfied:
	\begin{itemize}
		\item[A)] \quad$\dd t^2\.h''(t)+z^2\.f''(z)\geq 0\qquad \text{for all}\quad t,z\in(0,\infty)$\qquad (equivalent to Knowles and Sternberg i)\,)\,,
		\item[B)] \quad$\dd h'(t)\geq 0\qquad\text{for all}\quad t\geq 1$\qquad (equivalent to Knowles and Sternberg ii)\,)\,,
		\item[C)]\quad$\dd \frac{2t}{t-1}\.h'(t)-t^2\.h''(t)+z^2\.f''(z)\geq 0$\quad or\quad$a(t)+\left[b(t)-c(t)\right]z^2\.f''(z)\geq 0$\quad for all \;\mbox{$t,z\in(0,\infty)\,,\;t\neq 1$}\qquad (equivalent to Knowles and Sternberg iv)\,)\,,
		\item[D)]\quad$\dd \frac{2t}{t+1}\.h'(t)+t^2\.h''(t)-z^2\.f''(z)\geq 0\quad\text{or}\quad a(t)+\left[b(t)+c(t)\right]z^2\.f''(z)\geq 0\qquad\text{for all}\quad t,z\in(0,\infty)$\qquad (equivalent to Knowles and Sternberg v)\,)\,,
	\end{itemize}
	where
	\begin{align*}
		a(t)&= t^2(t^2-1)\.h'(t)h''(t)-2t\.h'(t)^2,\qquad b(t)= \left(t^2+3\right)h'(t)+2t\.(t^2+1)\.h''(t)\,,\\
		c(t)&=4t\left(h'(t)+t\.h''(t)\right).
	\end{align*}
\end{lemma}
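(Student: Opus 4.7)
The plan is to substitute $g(x,y) = h(x/y) + f(xy)$ into the five Knowles-Sternberg conditions of Lemma~\ref{lemma:knowlesSternberg} and rewrite each one in the new variables $(t,z) := (x/y,\, xy)$. A direct computation gives compact expressions such as
\begin{align*}
g_{xx} = \frac{t}{z}\,h''(t) + \frac{z}{t}\,f''(z), \qquad g_{yy} = \frac{t^3}{z}\,h''(t) + \frac{2t^2}{z}\,h'(t) + tz\,f''(z),
\end{align*}
with analogous formulas for $g_{xy}$, $g_x$, $g_y$. The symmetry $h(t) = h(1/t)$ translates into $h'(1/t) = -t^2 h'(t)$ and $h''(1/t) = t^4 h''(t) + 2t^3 h'(t)$, which I would use throughout to collapse pairs of statements related by $(x,y) \leftrightarrow (y,x)$ (equivalently $t \leftrightarrow 1/t$) into a single condition. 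For the easy cases: multiplying $g_{xx} \ge 0$ by $z/t > 0$ yields exactly (A), and $g_{yy} \ge 0$ is the same inequality under $t \mapsto 1/t$ via the $h$-symmetry; the Baker-Ericksen quotient of KS~(ii) reduces to $\frac{2 t\, h'(t)}{y(t-1)}$, giving (B) after again invoking the symmetry; and KS~(iii) (which only lives on $x = y$, where the symmetry of $h$ forces $h'(1) = 0$) coincides with KS~(v) restricted to $x = y$, so it contributes no new constraint.

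The heart of the proof is KS~(iv) and KS~(v). Setting
\begin{align*}
P := g_{xy} + \frac{g_x - g_y}{x - y}, \qquad Q := -g_{xy} + \frac{g_x + g_y}{x + y},
\end{align*}
and using (A) to guarantee $g_{xx}, g_{yy} \ge 0$ (so that $\sqrt{g_{xx} g_{yy}}$ is real), the inequalities $\sqrt{g_{xx} g_{yy}} + P \ge 0$ and $\sqrt{g_{xx} g_{yy}} + Q \ge 0$ each split into a disjunction: \emph{either} $P \ge 0$ (resp.\ $Q \ge 0$), \emph{or} $g_{xx} g_{yy} \ge P^2$ (resp.\ $Q^2$). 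A short computation yields
\begin{align*}
zP = \frac{2t}{t-1}\,h'(t) - t^2 h''(t) + z^2 f''(z), \qquad zQ = \frac{2t}{t+1}\,h'(t) + t^2 h''(t) - z^2 f''(z),
\end{align*}
which gives the first alternatives of (C) and (D). For the second alternatives I would establish the identities
\begin{align*}
\frac{z^2 (t-1)^2}{2t}\bigl(g_{xx} g_{yy} - P^2\bigr) &= a(t) + [b(t) - c(t)]\, z^2 f''(z), \\
\frac{z^2 (t+1)^2}{2t}\bigl(g_{xx} g_{yy} - Q^2\bigr) &= a(t) + [b(t) + c(t)]\, z^2 f''(z),
\end{align*}
by expanding both sides in the auxiliary quantities $\alpha := h''(t)$, $\beta := h'(t)$, $\gamma := z^2 f''(z)$ and matching monomials. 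The underlying cancellations are driven by $(t^2 \alpha + \gamma)^2 - (t^2 \alpha - \gamma)^2 = 4 t^2 \alpha \gamma$ together with the factorisations $b(t) \mp c(t) = (t \mp 1)\bigl[(t \mp 3)\,h'(t) + 2 t(t \mp 1)\,h''(t)\bigr]$, which are exactly what is needed to absorb the $(t \mp 1)^{-2}$ denominators produced by $(zP)^2$ and $(zQ)^2$.

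The main technical obstacle is precisely this last algebraic bookkeeping: the raw expansion of $g_{xx} g_{yy} - P^2$ carries mixed denominators of orders $(t-1)^{\pm 1}$ and $(t-1)^{\pm 2}$ together with monomials in three independent quantities, so regrouping it into the compact polynomial form $a(t) + [b(t) - c(t)]\,\gamma$ is a careful expansion rather than a conceptual step. Once the two identities above are verified, the equivalences (A)$\Leftrightarrow$KS\,(i), (B)$\Leftrightarrow$KS\,(ii), (C)$\Leftrightarrow$KS\,(iv), (D)$\Leftrightarrow$KS\,(v), combined with the containment of KS\,(iii) in KS\,(v), complete the proof.
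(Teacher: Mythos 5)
Your proposal is correct and follows essentially the same route as the paper: substitute $g(x,y)=h(x/y)+f(xy)$ into the Knowles--Sternberg conditions, pass to the variables $(t,z)$, use the symmetry $h(t)=h(1/t)$ to merge the paired conditions, split KS\,(iv) and KS\,(v) into the disjunction "$P\geq 0$ or $g_{xx}g_{yy}\geq P^2$", and verify the stated algebraic identities (which I checked against the paper's expansion and which are exactly right, including the factorisations $b\mp c=(t\mp1)\bigl[(t\mp3)h'+2t(t\mp1)h''\bigr]$). The only cosmetic difference is that you dispose of KS\,(iii) by identifying it with KS\,(v) at $x=y$, while the paper reduces it to $h''(1)\geq 0$ and notes this follows from (B); both arguments are valid.
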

\begin{remark}
	Note that Knowles and Sternberg condition iii) is redundant in this case, since it is already implied by condition B), cf.\ \cite{dacorogna01}.
\end{remark}
\begin{proof}
\parindent0em
\parskip0.5em
	We start by computing the partial derivatives
	\begin{align}
		g_x&=\ddx\left[h\bigg(\frac{x}{y}\bigg)+f(xy)\right]=\frac{1}{y}\.h'\bigg(\frac{x}{y}\bigg)+y\.f'(xy)\,,\notag\\
		g_y&=\ddy\left[h\bigg(\frac{x}{y}\bigg)+f(xy)\right]=-\frac{x}{y^2}\.h'\bigg(\frac{x}{y}\bigg)+x\.f'(xy)\,,\notag\\
		g_{xx}&=\ddx\left[\frac{1}{y}\.h'\bigg(\frac{x}{y}\bigg)+y\.f'(xy)\right]=\frac{1}{y^2}\.h''\bigg(\frac{x}{y}\bigg)+y^2\.f''(xy)\,,\\
		g_{xy}&=\ddx\left[-\frac{x}{y^2}\.h'\bigg(\frac{x}{y}\bigg)+x\.f'(xy)\right]=-\frac{1}{y^2}\.h'\bigg(\frac{x}{y}\bigg)-\frac{x}{y^3}\.h''\bigg(\frac{x}{y}\bigg)+f'(xy)+x\.y\.f''(xy)\,,\notag\\
		g_{yy}&=\ddy\left[-\frac{x}{y^2}\.h'\bigg(\frac{x}{y}\bigg)+x\.f'(xy)\right]=\frac{2x}{y^3}\.h'\bigg(\frac{x}{y}\bigg)+\frac{x^2}{y^4}\.h''\bigg(\frac{x}{y}\bigg)+x^2\.f''(xy)\notag
	\end{align}
	of the energy function with respect to the singular values. Next, we utilize the specific form of the additive split by introducing the transformation of variables
	\begin{align}
		t\colonequals \frac{x}{y}\,,\qquad z\colonequals x\.y\qquad\iff\qquad x=\sqrt{z\.t}\,,\qquad y=\sqrt{\frac{z}{t}}\,,\qquad t,z\in (0,\infty)
	\end{align}
	which allows us to decouple the conditions for $g(x,y)$ into conditions for $f(z)$ and $h(t)$. Moreover, the invariance property $h(t)=h\big(\frac{1}{t}\big)$ of the isochoric part $h$ yields
	\begin{align}
		h'(t)&=\ddt h(t)=\ddt h\left(\frac{1}{t}\right)=-\frac{1}{t^2}\.h'\left(\frac{1}{t}\right),\\
		h''(t)&=\ddt h'(t)=\ddt\left[-\frac{1}{t^2}\.h'\left(\frac{1}{t}\right)\right]=\frac{2}{t^3}\.h'\left(\frac{1}{t}\right)+\frac{1}{t^4}\.h''\left(\frac{1}{t}\right).\label{eq:symmetryofh}
	\end{align}
	Note that for $t=1$, equation \eqref{eq:symmetryofh} already implies\footnote{The equality $h'(1)=0$ even holds for all $h\in C^1((0,\infty),\R)$, since in that case,
	\begin{align*}
		h'(1)=\lim_{t\searrow 1}\frac{h(t)-h(1)}{t-1}=\lim_{t\searrow 1}\frac{h\big(\frac{1}{t}\big)-h\big(\frac{1}{1}\big)}{1-\frac{1}{t}}\.\frac{1}{t}\overset{s\colonequals\frac{1}{t}}{=}\lim_{s\nearrow 1}\frac{h(s)-h(1)}{1-s}\.1=-h'(1)\,.
	\end{align*}}
	\begin{align*}
		h''(1)=h'(1)+h''(1)\qquad\implies\qquad h'(1)=0\,.
	\end{align*}
	In the following, we transform the conditions i)-v) of Knowles and Sternberg Lemma \ref{lemma:knowlesSternberg} one by one.\par
	\textbf{Knowles and Sternberg i)}\quad $g_{xx}\geq 0$\quad and\quad$g_{yy}\geq 0$\quad for all $x,y,\in(0,\infty)$: We compute
	\begin{align}
		&&&g_{xx}=\frac{1}{y^2}\.h''\bigg(\frac{x}{y}\bigg)+y^2\.f''(xy)\geq 0\notag\\
		&\iff&&\frac{t}{z}\.h''(t)+\frac{z}{t}\.f''(z)\geq 0\qquad\overset{\cdot zt}{\iff}\qquad t^2\.h''(t)+z^2\.f''(z)\geq 0\,,\label{eq:conditionSC}\\
		&\text{and}&&g_{yy}=\frac{2x}{y^3}\.h'\bigg(\frac{x}{y}\bigg)+\frac{x^2}{y^4}\.h''\bigg(\frac{x}{y}\bigg)+x^2\.f''(xy)\geq 0\label{eq:conditionSC2}\\
		&\overset{\cdot y^2}{\iff}&&\frac{2x}{y}\.h'\bigg(\frac{x}{y}\bigg)+\frac{x^2}{y^2}\.h''\bigg(\frac{x}{y}\bigg)+x^2y^2\.f''(xy)\geq 0\quad\iff\quad 2t\.h'(t)+t^2\.h''(t)+z^2\.f''(z)\geq 0\,.\notag
	\end{align}
	The first condition \eqref{eq:conditionSC} already has the desired form. For the second condition \eqref{eq:conditionSC2} we use the symmetry property \eqref{eq:symmetryofh} for a further transformation, using the equality
	\begin{align}
		2t\.h'(t)+t^2\.h''(t)=2t\left[-\frac{1}{t^2}\.h'\left(\frac{1}{t}\right)\right]+t^2\left[\frac{2}{t^3}\.h'\left(\frac{1}{t}\right)+\frac{1}{t^4}\.h''\left(\frac{1}{t}\right)\right]=\frac{1}{t^2}\.h''\left(\frac{1}{t}\right)
	\end{align}
	to substitute
	\begin{align}
		&&&2t\.h'(t)+t^2\.h''(t)+z^2\.f''(z)=\frac{1}{t^2}\.h''\left(\frac{1}{t}\right)+z^2\.f''(z)\geq 0\qquad\text{for all}\quad t\in(0,\infty)\notag\\
		&\overset{s\colonequals\frac{1}{t}}{\iff}&&s^2\.h''(s)+z^2\.f''(z)\geq 0\qquad\text{for all}\quad z\in(0,\infty)\,,
	\end{align}
	which is equivalent to condition \eqref{eq:conditionSC}. Thus\par
	\fbox{\parbox{0.99\textwidth}{\textbf{i)}\hfill
		\centering $\dd g_{xx}\geq 0\text{ and }g_{yy}\geq 0\quad\text{for all }\;x,y\in(0,\infty)\;\;\iff\;\; t^2\.h''(t)+z^2\.f''(z)\geq 0\quad\text{for all }\;t,z\in(0,\infty)$.
	\hfill\textbf{A)}}}\par
	Note that A) implies $\inf_{t\in(0,\infty)}t^2\.h''(t)\,,\;\inf_{z\in(0,\infty)}z^2\.f''(z)\in\R$.

	\textbf{Knowles and Sternberg ii)}\quad $\dd\frac{x\.g_x-y\.g_y}{x-y}\geq 0$\quad for all $x,y\in (0,\infty)\,,\; x\neq y$: We compute
	\begin{align}
		&&&\hspace{-6cm}\frac{x\.g_x-y\.g_y}{x-y}=\frac{\frac{x}{y}\.h'\big(\frac{x}{y}\big)+xy\.f'(xy)-\left(-\frac{x}{y}\.h'\big(\frac{x}{y}\big)+xy\.f'(xy)\right)}{x-y}=\frac{2\.\frac{x}{y}\.h'\big(\frac{x}{y}\big)}{x-y}\geq 0\notag\\
		&\iff\qquad 2\frac{x}{y}\.h'\bigg(\frac{x}{y}\bigg)\geq 0\qquad\text{for all }x>y&&\text{and}\qquad 2\.\frac{x}{y}\.h'\bigg(\frac{x}{y}\bigg)\leq 0\qquad\text{for all }x<y\notag\\
		&\iff\qquad 2t\.h'(t)\geq 0\qquad\text{for all }t>1&&\text{and}\qquad 2t\.h'\left(t\right)\leq 0\qquad\text{for all }t<1\label{eq:conditionBE}\\
		&\overset{\cdot\frac{1}{2t}}{\iff}\qquad h'(t)\geq 0\qquad\text{for all }t>1&&\text{and}\qquad h'(t)\leq 0\qquad\text{for all }t<1\,.\notag
	\end{align}
	Since the invariance property \eqref{eq:symmetryofh}, i.e.\ $h(t)=h\big(\frac{1}{t}\big)$ for all $t\in\R$, yields
	\begin{align}
		h'(t)\geq 0\qquad\text{for all}\quad t>1\qquad\iff\qquad h'(t)\leq 0\qquad\text{for all}\quad t<1\,,
	\end{align}
	we obtain the equivalence\par
	\fbox{\parbox{0.99\textwidth}{\textbf{ii)}\hfill
		\centering $\dd\frac{x\.g_x-y\.g_y}{x-y}\geq 0\quad\text{for all }x,y\in(0,\infty)\qquad\iff\qquad h'(t)\geq 0\quad\text{for all }t\geq 1\,.$
		\hfill\textbf{B)}}}\par
	Note that B) implies $h'(1)=0$ as well as $h''(1)\geq 0$.\par
	\textbf{Knowles and Sternberg iii)}\quad $\dd g_{xx}-g_{xy}+\frac{g_x}{x}\geq 0\,, \quad g_{yy}-g_{xy}+\frac{g_y}{y}\geq 0$\quad for all $x=y\in (0,\infty)$:
	
	 We compute
	\begin{align}
		&&&g_{xx}(x,x)-g_{xy}(x,x)+\frac{g_x}{x}(x,x)=\frac{1}{x^2}\.h''(1)+x^2\.f''(x^2)+\frac{1}{x^2}\.h'(1)+\frac{1}{x^2}\.h''(1)-f'(x^2)\notag\\
		&&&\hspace{5.2cm}-x^2\.f''(x^2)+\frac{1}{x^2}\.h'(1)+f'(x^2)\geq 0\notag\\
		&\iff&& \frac{2}{x^2}\.h''(1)+\frac{2}{x^2}\.\underbrace{h'(1)}_{=0}\geq 0\qquad\iff\qquad h''(1)\geq 0\,,\\
		&\text{and}&&g_{yy}(x,x)-g_{xy}(x,x)+\frac{g_y}{y}(x,x)=\frac{2}{x^2}\.h'(1)+\frac{1}{x^2}\.h''(1)+x^2\.f''(x^2)+\frac{1}{x^2}\.h'(1)+\frac{1}{x^2}\.h''(1)\notag\\
		&&&\hspace{5.2cm}-f'(x^2)-x^2\.f''(x^2)-\frac{1}{x^2}\.h'(1)+f'(x^2)\geq 0\notag\\
		&\iff&& \frac{2}{x^2}\.h''(1)+\frac{2}{x^2}\.\underbrace{h'(1)}_{=0}\geq 0\qquad\iff\qquad h''(1)\geq 0\,.
	\end{align}
	Thus\par
	\fbox{\parbox{0.99\textwidth}{\textbf{iii)}\hfill
		\centering $\dd g_{xx}-g_{xy}+\frac{g_x}{x}\geq 0\,, \quad g_{yy}-g_{xy}+\frac{g_y}{y}\geq 0\quad\text{for all }x=y\in(0,\infty)\qquad\iff\qquad h''(1)\geq 0\,,$
		\hfill\phantom{.}}}\par
	the latter condition being already implied by condition B), which in turn follows from Knowles and Sternberg ii).\par
	\textbf{Knowles and Sternberg iv)+v)}\quad We compute
	\begin{align}
		g_{xx}\,g_{yy}&=\left(\frac{1}{y^2}\.h''\bigg(\frac{x}{y}\bigg)+y^2\.f''(xy)\right)\left(\frac{2x}{y^3}\.h'\bigg(\frac{x}{y}\bigg)+\frac{x^2}{y^4}\.h''\bigg(\frac{x}{y}\bigg)+x^2\.f''(xy)\right)\notag\\
		&=\frac{2x}{y^5}\.h'\bigg(\frac{x}{y}\bigg)h''\bigg(\frac{x}{y}\bigg)+\frac{x^2}{y^6}\.h''\bigg(\frac{x}{y}\bigg)^2+\left[\frac{2x}{y}\.h'\bigg(\frac{x}{y}\bigg)+\frac{x^2}{y^2}\.h''\bigg(\frac{x}{y}\bigg)+\frac{x^2}{y^2}\.h''\bigg(\frac{x}{y}\bigg)\right]f''(xy)\notag\\
		&\phantom{=}\;+x^2y^2\.f''(xy)^2\notag\\
		&=\frac{1}{z^2}\left[2t^3\.h'(t)h''(t)+t^4\.h''(t)^2\right]+\left[2t\.h'(t)+2t^2\.h''(t)\right]f''(z)+z^2f''(z)^2\,,
		\displaybreak[0]\\
		\frac{g_x-g_y}{x-y}&=\frac{\frac{1}{y}\.h'\bigg(\frac{x}{y}\bigg)+y\.f'(xy)+\frac{x}{y^2}\.h'\bigg(\frac{x}{y}\bigg)-x\.f'(xy)}{x-y}=\frac{1}{y^2}\frac{x+y}{x-y}\.h'\bigg(\frac{x}{y}\bigg)-f'(xy)\,, \notag
		\displaybreak[0]\\
		g_{xy}+\frac{g_x-g_y}{x-y}&=-\frac{1}{y^2}\.h'\bigg(\frac{x}{y}\bigg)-\frac{x}{y^3}\.h''\bigg(\frac{x}{y}\bigg)+f'(xy)+xy\.f''(xy)+\frac{1}{y^2}\frac{x+y}{x-y}\.h'\bigg(\frac{x}{y}\bigg)-f'(xy)\notag\\
		&=\frac{1}{y^2}\left(\frac{x+y}{x-y}-1\right)\.h'\bigg(\frac{x}{y}\bigg)-\frac{x}{y^3}\.h''\bigg(\frac{x}{y}\bigg)+xy\.f''(xy)\notag\\
		&=\frac{2}{y\.(x-y)}\.h'\bigg(\frac{x}{y}\bigg)-\frac{x}{y^3}\.h''\bigg(\frac{x}{y}\bigg)+xy\.f''(xy)\,,
		\displaybreak[0]\\
		\frac{g_x+g_y}{x+y}&=\frac{\frac{1}{y}\.h'\bigg(\frac{x}{y}\bigg)+y\.f'(xy)-\frac{x}{y^2}\.h'\bigg(\frac{x}{y}\bigg)+x\.f'(xy)}{x+y}=\frac{1}{y^2}\frac{y-x}{x+y}\.h'\bigg(\frac{x}{y}\bigg)+f'(xy)\,,\notag
		\displaybreak[0]\\
		-g_{xy}+\frac{g_x+g_y}{x+y}&=\frac{1}{y^2}\.h'\bigg(\frac{x}{y}\bigg)+\frac{x}{y^3}\.h''\bigg(\frac{x}{y}\bigg)-f'(xy)-xy\.f''(xy)+\frac{1}{y^2}\frac{y-x}{x+y}\.h'\bigg(\frac{x}{y}\bigg)+f'(xy)\notag\\
		&=\frac{1}{y^2}\left(\frac{x-y}{x+y}+1\right)\.h'\bigg(\frac{x}{y}\bigg)+\frac{x}{y^3}\.h''\bigg(\frac{x}{y}\bigg)-xy\.f''(xy)\notag\\
		&=\frac{2}{y\.(x+y)}\.h'\bigg(\frac{x}{y}\bigg)+\frac{x}{y^3}\.h''\bigg(\frac{x}{y}\bigg)-xy\.f''(xy)
	\end{align}
	and substitute
	\begin{align}
		y\.(x-y)&=\sqrt{\frac{z}{t}}\left(\sqrt{zt}-\sqrt{\frac{z}{t}}\right)=z-\frac{z}{t}=\frac{z\.(t-1)}{t}&&\implies\qquad\frac{2}{y\.(x-y)}=\frac{2t}{z\.(t-1)}\,,\\
		y\.(x+y)&=\sqrt{\frac{z}{t}}\left(\sqrt{zt}+\sqrt{\frac{z}{t}}\right)=z+\frac{z}{t}=\frac{z\.(t+1)}{t}&&\implies\qquad\frac{2}{y\.(x+y)}=\frac{2t}{z\.(t+1)}
	\end{align}
	to find
	\begin{align}
		g_{xy}+\frac{g_x-g_y}{x-y}&=\frac{2}{y\.(x-y)}\.h'\bigg(\frac{x}{y}\bigg)-\frac{x}{y^3}\.h''\bigg(\frac{x}{y}\bigg)+xy\.f''(xy)=\frac{1}{z}\left[\frac{2t}{t-1}\.h'(t)-t^2\.h''(t)\right]+z\.f''(z)\,,\\
		-g_{xy}+\frac{g_x+g_y}{x+y}&=\frac{2}{y\.(x+y)}\.h'\bigg(\frac{x}{y}\bigg)+\frac{x}{y^3}\.h''\bigg(\frac{x}{y}\bigg)-xy\.f''(xy)=\frac{1}{z}\left[\frac{2t}{t+1}\.h'(t)+t^2\.h''(t)\right]-z\.f''(z)\,.\notag
	\end{align}
	For further computations, we need to express the Knowles-Sternberg conditions in a form not involving square roots\footnote{The square root expression does not allow for a proper decoupling of $h(t)$ and $z(z)$, since
	\begin{align*}
		\sqrt{g_{xx}\,g_{yy}}&=\sqrt{\frac{1}{z^2}\left[2t^3\.h'(t)h''(t)+t^4\.h''(t)^2\right]+\left[2t\.h'(t)+2t^2\.h''(t)\right]f''(z)+z^2f''(z)^2}\\
		&=\frac{1}{z}\sqrt{t^2h'(t)^2-t^2h'(t)^2+2t^3\.h'(t)h''(t)+t^4\.h''(t)^2+2\left[t\.h'(t)+t^2\.h''(t)\right]z^2f''(z)+z^4f''(z)^2}\\
		&=\frac{1}{z}\sqrt{\left[t\.h'(t)+t^2\.h''(t)+z^2f''(z)\right]^2-t^2h'(t)^2}.
	\end{align*}} in the term $\sqrt{g_{xx}\,g_{yy}}$. However, we will need to pay close attention to the sign of occurring terms.\par
	\textbf{Knowles and Sternberg iv)}\quad $\dd\sqrt{g_{xx}\,g_{yy}}+g_{xy}+\frac{g_x-g_y}{x-y}\geq 0$\quad for all $x,y\in (0,\infty)\,,\; x\neq y$: It holds
	\begin{align}
		\sqrt{g_{xx}\,g_{yy}}\geq -g_{xy}-\frac{g_x-g_y}{x-y}\qquad\iff\qquad g_{xy}+\frac{g_x-g_y}{x-y}\geq 0\quad\text{or}\quad g_{xx}\,g_{yy}\geq \left(g_{xy}+\frac{g_x-g_y}{x-y}\right)^2.\label{eq:condition4cases}
	\end{align}
	For the first case in equation \eqref{eq:condition4cases} we compute
	\begin{align}
		g_{xy}+\frac{g_x-g_y}{x-y}=\frac{1}{z}\left[\frac{2t}{t-1}\.h'(t)-t^2\.h''(t)\right]+z\.f''(z)\geq 0\quad\overset{\cdot z}{\iff}\quad\frac{2t}{t-1}\.h'(t)-t^2\.h''(t)+z^2\.f''(z)\geq 0\,.
	\end{align}
	For the second case in equation \eqref{eq:condition4cases} we start with
	\begin{align}
		\left(g_{xy}+\frac{g_x-g_y}{x-y}\right)^2&=\left(\frac{1}{z}\left[\frac{2t}{t-1}\.h'(t)-t^2\.h''(t)\right]+z\.f''(z)\right)^2\notag\\
		&=\frac{1}{z^2}\left[\frac{4t^2}{(t-1)^2}\.h'(t)^2-\frac{4t^3}{t-1}\.h'(t)h''(t)+t^4\.h''(t)^2\right]\\
		&\phantom{=}\;+\left[\frac{4t}{t-1}\.h'(t)-2t^2\.h''(t)\right]f''(z)+z^2f''(z)^2\notag
	\end{align}
	and compute
	\begin{align}
		&&&g_{xx}\,g_{yy}\geq \left(g_{xy}+\frac{g_x-g_y}{x-y}\right)^2\notag\\
		&\iff&&\frac{1}{z^2}\left[2t^3\.h'(t)h''(t)+t^4\.h''(t)^2\right]+\left[2t\.h'(t)+2t^2\.h''(t)\right]f''(z)+z^2f''(z)^2\notag\\
		&&&\geq \frac{1}{z^2}\left[\frac{4t^2}{(t-1)^2}\.h'(t)^2-\frac{4t^3}{t-1}\.h'(t)h''(t)+t^4\.h''(t)^2\right]+\left[\frac{4t}{t-1}\.h'(t)-2t^2\.h''(t)\right]f''(z)+z^2f''(z)^2\notag\\
		&\overset{\mathclap{\cdot z^2}}{\iff}&&\left(2t^3+\frac{4t^3}{t-1}\right)h'(t)h''(t)-\frac{4t^2}{(t-1)^2}\.h'(t)^2+\left[\left(2t-\frac{4t}{t-1}\right)h'(t)+4t^2\.h''(t)\right]z^2\.f''(z)\geq 0\notag\\
		&\overset{\mathclap{\cdot\frac{(t-1)^2}{2t}}}{\iff}&& t^2(t^2-1)\.h'(t)h''(t)-2t\.h'(t)^2+\left[\left(t^2-4t+3\right)h'(t)+2t\.(t-1)^2\.h''(t)\right]z^2\.f''(z)\geq 0\,.
	\end{align}
	Thus, with the definitions
	\begin{align*}
		a(t)&= t^2(t^2-1)\.h'(t)h''(t)-2t\.h'(t)^2\,,\qquad b(t)= \left(t^2+3\right)h'(t)+2t\.(t^2+1)\.h''(t)\,,\\
		c(t)&=4t\left(h'(t)+t\.h''(t)\right)
	\end{align*}
	we obtain the equivalence\par
	\fbox{\parbox{0.99\textwidth}{$\begin{array}{lcr}\textbf{iv)}&\dd\sqrt{g_{xx}\,g_{yy}}+g_{xy}+\frac{g_x-g_y}{x-y}\geq 0\quad\text{for all }x,y,\in(0,\infty)\,,x\neq y\qquad\iff&\\
		&\hspace{-2.45em}\frac{2t}{t-1}\.h'(t)-t^2\.h''(t)+z^2\.f''(z)\geq 0\quad\text{or}\quad a(t)+\left[b(t)-c(t)\right]z^2\.f''(z)\geq 0\quad\text{for all }\;t,z\in(0,\infty)\,,\;t\neq 1\,.\hspace*{-.637em}&\textbf{C)}\end{array}$}}\par
	\textbf{Knowles and Sternberg v)}\quad $\dd\sqrt{g_{xx}\,g_{yy}}-g_{xy}+\frac{g_x+g_y}{x+y}\geq 0$\quad for all $x,y\in (0,\infty)$: It holds
	\begin{align}
		\sqrt{g_{xx}\,g_{yy}}\geq g_{xy}-\frac{g_x+g_y}{x+y}\quad\iff\quad -g_{xy}+\frac{g_x+g_y}{x+y}\geq 0\quad\text{or}\quad g_{xx}\,g_{yy}\geq \left(-g_{xy}+\frac{g_x+g_y}{x+y}\right)^2.\label{eq:condition5cases}
	\end{align}
	For the first case in equation \eqref{eq:condition5cases} we compute
	\begin{align}
		-g_{xy}+\frac{g_x+g_y}{x+y}=\frac{1}{z}\left[\frac{2t}{t+1}\.h'(t)+t^2\.h''(t)\right]-z\.f''(z)\geq 0\quad\overset{\cdot z}{\iff}\quad\frac{2t}{t+1}\.h'(t)+t^2\.h''(t)-z^2\.f''(z)\geq 0\,.
	\end{align}
	For the second case in equation \eqref{eq:condition5cases} we start with
	\begin{align}
		\left(-g_{xy}+\frac{g_x+g_y}{x+y}\right)^2&=\left(\frac{1}{z}\left(\frac{2t}{t+1}\.h'(t)+t^2\.h''(t)\right)-z\.f''(z)\right)^2\notag\\
		&=\frac{1}{z^2}\left(\frac{4t}{(t+1)^2}\.h'(t)^2+\frac{4t^3}{t+1}\.h'(t)h''(t)+t^4\.h''(t)^2\right)\\
		&\phantom{=}\;-\left(\frac{4t}{t+1}\.h'(t)+2t^2\.h''(t)\right)f''(z)+z^2f''(z)^2\notag
	\end{align}
	and compute
	\begin{align}
		&&&g_{xx}\,g_{yy}\geq \left(-g_{xy}+\frac{g_x+g_y}{x+y}\right)^2\notag\\
		&\iff&&\frac{1}{z^2}\left[2t^3\.h'(t)h''(t)+t^4\.h''(t)^2\right]+\left[2t\.h'(t)+2t^2\.h''(t)\right]f''(z)+z^2f''(z)^2\notag\\
		&&&\geq \frac{1}{z^2}\left[\frac{4t^2}{(t+1)^2}\.h'(t)^2+\frac{4t^3}{t+1}\.h'(t)h''(t)+t^4\.h''(t)^2\right]-\left[\frac{4t}{t+1}\.h'(t)+2t^2\.h''(t)\right]f''(z)+z^2f''(z)^2\notag\\
		&\overset{\mathclap{\cdot z^2}}{\iff}&&\left(2t^3-\frac{4t^3}{t+1}\right)h'(t)h''(t)-\frac{4t^2}{(t+1)^2}\.h'(t)^2+\left[\left(2t+\frac{4t}{t+1}\right)h'(t)+4t^2\.h''(t)\right]z^2\.f''(z)\geq 0\notag\\
		&\overset{\mathclap{\cdot\frac{(t+1)^2}{2t}}}{\iff}&& t^2(t^2-1)\.h'(t)h''(t)-2t\.h'(t)^2+\left[\left(t^2+4t+3\right)h'(t)+2t\.(t+1)^2\.h''(t)\right]z^2\.f''(z)\geq 0
	\end{align}
	Thus with the previous definitions
	\begin{align*}
		a(t)&= t^2(t^2-1)\.h'(t)h''(t)-2t\.h'(t)^2\,,\qquad b(t)= \left(t^2+3\right)h'(t)+2t\.(t^2+1)\.h''(t)\,,\\
		c(t)&=4t\left(h'(t)+t\.h''(t)\right)\,,
	\end{align*}
	we find	\par
	\fbox{\parbox{0.99\textwidth}{$\begin{array}{lcr}\textbf{v)}&\dd\sqrt{g_{xx}\,g_{yy}}-g_{xy}+\frac{g_x+g_y}{x+y}\geq 0\quad\text{for all }x,y,\in(0,\infty)\qquad\iff&\\
		&\frac{2t}{t+1}\.h'(t)+t^2\.h''(t)-z^2\.f''(z)\geq 0\quad\text{or}\quad a(t)+\left[b(t)+c(t)\right]z^2\.f''(z)\geq 0\quad\text{for all } t,z\in(0,\infty)\,,&\textbf{D)}\end{array}$}}\par
	thereby establishing the last equivalence and completing the proof of Lemma \ref{lemma:voliso}.
\end{proof}
%
%
%
%
%
\subsection{Reduction to a family of coupled one-dimensional differential inequalities}
In the following theorem we show that, surprisingly, it is possible to reduce the two-dimensional problem of rank-one convexity for planar objective-isotropic energies with additive volumetric-isochoric split to only coupled one-dimensional conditions.
\begin{theorem}\label{theorem:main}
	Let $W\col\GLp(2)\to\R$ be an objective-isotropic function of class $C^2$ with additive volumetric-isochoric split with the representation in terms of the singular values of the deformation gradient $F$ via $W(F)=g(\lambda_1,\lambda_2)=h\big(\frac{\lambda_1}{\lambda_2}\big)+f(\lambda_1\lambda_2)$, where $f,h\in C^2((0,\infty),\R)$, and let $h_0=\inf_{t\in(0,\infty)}t^2\.h''(t)$ and $f_0=\inf_{z\in(0,\infty)}z^2\.f''(z)$. Then $W$ is rank-one convex if and only if
	\begin{itemize}
		\item[1)] \qquad$\dd h_0+f_0\geq 0$\,,
		\item[2)] \qquad$\dd h'(t)\geq 0\qquad\text{for all}\quad t\geq 1$\,,
		\item[3)]\qquad$\dd \frac{2t}{t-1}\.h'(t)-t^2\.h''(t)+f_0\geq 0$\quad or\quad $\dd a(t)+\left[b(t)-c(t)\right]f_0\geq 0$\qquad$\text{for all}\quad t\in(0,\infty)\,,\;t\neq 1$\,,\\ 
		\item[4)]\qquad$\dd \frac{2t}{t+1}\.h'(t)+t^2\.h''(t)-f_0\geq 0\quad\text{or}\quad a(t)+\left[b(t)+c(t)\right]f_0\geq 0\qquad\text{for all}\quad t\in(0,\infty)$\,,
	\end{itemize}
	where
	\begin{align*}
		a(t)&=t^2(t^2-1)\.h'(t)h''(t)-2t\.h'(t)^2,\qquad b(t)=\left(t^2+3\right)h'(t)+2t\.(t^2+1)\.h''(t)\,,\\
		c(t)&=4t\left(h'(t)+t\.h''(t)\right).
	\end{align*}
\end{theorem}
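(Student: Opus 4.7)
Plan: I would take Lemma 2.3 as the starting point and reduce each of its coupled $(t,z)$-conditions A)--D) to the corresponding one-variable condition 1)--4) by integrating out the $z$-dependence.

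For Condition 1): since A) reads $t^2 h''(t) + z^2 f''(z) \geq 0$ for all $t,z\in(0,\infty)$, the variables separate additively, so the condition is equivalent to $\inf_t t^2 h''(t) + \inf_z z^2 f''(z) \geq 0$, i.e.\ $h_0 + f_0 \geq 0$. Condition 2) coincides verbatim with B). The work is therefore concentrated in reducing C) and D) to 3) and 4).

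Both C) and D) have the structure ``for all $(t,z)$: $L_1(t) + s\,\zeta \geq 0$ or $a(t) + \beta(t)\,\zeta \geq 0$'', with $\zeta = z^2 f''(z)$, $s = \pm 1$, and $\beta(t) = b(t) \mp c(t)$. Since the $f$-dependence enters only through $\zeta$, which ranges in a subset of $[f_0,+\infty)$, I would fix $t$ and analyse the convex function $\phi_t(\zeta) := \max\bigl(L_1(t) + s\zeta,\, a(t) + \beta(t)\zeta\bigr)$, comparing the Theorem condition $\phi_t(f_0) \geq 0$ with the Lemma condition $\phi_t \geq 0$ on the image of $z^2 f''(z)$. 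The easy direction (Lemma $\Rightarrow$ Theorem) is continuity plus pigeonhole: pick a sequence $z_n$ with $z_n^2 f''(z_n) \to f_0$; one branch of the ``or'' must stay non-negative along a subsequence, and by continuity the corresponding inequality passes to the limit $\zeta = f_0$.

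For the hard direction (Theorem $\Rightarrow$ Lemma), if the first alternative $L_1(t) + s\,f_0 \geq 0$ holds, propagation to $\zeta \geq f_0$ is immediate when $s = +1$ (as in C)), while for D) ($s = -1$) one splits the $\zeta$-axis into a range covered by the first alternative and its complement covered by the second. If only the second alternative holds at $\zeta = f_0$ and $\beta(t)$ has the favourable sign, the argument propagates analogously. The \textbf{main obstacle} is the opposite-sign case (e.g.\ $b(t) - c(t) < 0$ in C)), where $\phi_t$ is V-shaped and could in principle dip below zero at some intermediate $\zeta > f_0$. Resolving this requires exploiting the explicit polynomial forms of $a(t), b(t), c(t)$ in terms of $h'(t), h''(t)$, together with Condition 2) ($h'\geq 0$ for $t\geq 1$) and the already-established Condition 1): I would look for an identity of the form $a(t) + (b(t) - c(t))\,P_1(t) \geq 0$ (up to sign conventions), where $P_1(t) = \frac{2t}{t-1} h'(t) - t^2 h''(t)$, ensuring that the two ``failure half-lines'' in the $\zeta$-axis are complementary and hence $\phi_t \geq 0$ on all of $[f_0,\infty)$. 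The treatment of D) is symmetric, with $P_2(t) = \frac{2t}{t+1} h'(t) + t^2 h''(t)$ and $\beta(t) = b(t) + c(t)$ in place of their C)-counterparts.
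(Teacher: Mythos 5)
Your treatment of conditions A), B) and of the direction \enquote{$W$ rank-one convex $\Rightarrow$ 1)--4)} matches the paper: the pigeonhole-plus-continuity argument along a sequence $z_n$ with $z_n^2 f''(z_n)\to f_0$ is exactly the paper's observation that $P_{t,h}(f_0)\geq\inf_z P_{t,h}\bigl(z^2f''(z)\bigr)\geq 0$ for the continuous function $P_{t,h}$ given by the maximum of the two affine branches.

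The gap is in the converse direction. You correctly isolate the dangerous case where the maximum of the two branches is V-shaped in $\zeta$ (e.g.\ $b(t)-c(t)<0$ in C)), but the identity you propose in order to close it --- that $a(t)\pm\bigl(b(t)-c(t)\bigr)P_1(t)\geq 0$ as a statement about $h$ alone --- is false. Take $t=2$, $h'(2)=10$, $h''(2)=1$: then $a(2)=-280$, $b(2)-c(2)=-6<0$, $P_1(2)=36$, and the overlap condition for the two validity half-lines reads $a(2)-\bigl(b(2)-c(2)\bigr)P_1(2)=-280+216=-64<0$, so they do \emph{not} cover $[f_0,\infty)$. No pointwise inequality in $h'(t),h''(t)$ can rescue this; what actually excludes the bad configuration is condition 1), since the first branch can only fail at $\zeta=f_0$ when $f_0<-P_1(t)$ is very negative, and then $h_0\geq -f_0$ forces $t^2h''(t)$ to be large, contradicting the chosen data. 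Your outline mentions conditions 1) and 2) only in passing and gives no mechanism for this interplay, so the propagation route is not viable as written. The paper avoids it entirely: choose $\hat{f}$ with $z^2\hat{f}''(z)\equiv f_0$ (e.g.\ $\hat{f}(z)=-f_0\log z$); then conditions 1)--4) for $W$ are \emph{verbatim} conditions A)--D) of Lemma \ref{lemma:voliso} for $\widehat{W}(F)=h(\lambda_1/\lambda_2)+\hat{f}(\lambda_1\lambda_2)$, so $\widehat{W}$ is rank-one convex; moreover $\tilde{f}=f-\hat{f}$ satisfies $\inf_z z^2\tilde{f}''(z)=f_0-f_0=0$, hence is convex, and $W=\widehat{W}+\tilde{f}(\det F)$ is a sum of two rank-one convex functions. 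You should adopt this decomposition in place of the direct propagation.
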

\begin{proof}
	First, let the conditions 1)--4) be satisfied for some energy $W$. Choose a function $\fhat\col(0,\infty)\to\R$ such that $z^2\.\fhat''(z)\equiv f_0$ and let $\What(F)\colonequals h\big(\frac{\lambda_1}{\lambda_2}\big)+\fhat(\lambda_1\lambda_2)$. Then conditions 1)--4) for the original energy $W$ are equivalent to conditions A)--D) in Lemma \ref{lemma:voliso} for the energy $\What$. Therefore, $\What$ is rank-one convex. For $\ftilde(z)\colonequals f(z)-\fhat(z)$, we also find
	\begin{align}
		\inf_{z\in(0,\infty)}z^2\.\ftilde''(z)&=\inf_{z\in(0,\infty)}\left(z^2\.f''(z)-z^2\.\fhat''(z)\right)=f_0-f_0=0\notag\\
		\implies\qquad\qquad \ftilde''(z)&\geq 0\qquad\text{for all }\; z\in(0,\infty)\,,
	\end{align}
	thus $\ftilde$ is convex. Finally, $W$ can be decomposed into the sum
	\begin{align*}
		W(F)=h\bigg(\frac{\lambda_1}{\lambda_2}\bigg)+f(\det F)=h\bigg(\frac{\lambda_1}{\lambda_2}\bigg)+\fhat(\lambda_1\lambda_2)+f(\det F)-\fhat(\det F)=\What(F)+\ftilde(\det F)
	\end{align*}
	of the rank-one convex energy $\What$ and the $F\mapsto\ftilde(\det F)$, which is rank-one convex due to the convexity of $\ftilde$. Therefore, the energy $W$ is rank-one convex.
	
	Now, let $W$ be rank-one convex. We observe that for given $h$ and arbitrary fixed $t>0$, each of the conditions A)--D) of Lemma \ref{lemma:voliso} can be written as an inequality $P_{t,h}(z^2f''(z))\geq0$ for some continuous function $P_{t,h}$. Furthermore, using the same functions, each of the conditions 1)--4) can be expressed in the form $P_{t,h}(f_0)\geq0$. If $W$ is rank-one convex, i.e.\ if the conditions A)--D) are satisfied for all $z\in(0,\infty)$, then
	\[
		P_{t,h}(f_0) = P_{t,h}\left(\inf_{z\in(0,\infty)}z^2f''(z)\right) \geq \inf_{z\in(0,\infty)} P_{t,h}\left(z^2f''(z)\right) \geq 0
	\]
	due to continuity, therefore conditions 1)--4) hold as well.
%
\end{proof}
\begin{corollary}\label{lemma:bcgeqzero}
	Rank-one convexity of the energy $W(F)=h\bigl(\frac{\lambda_1}{\lambda_1}\bigr)+f(\lambda_1\.\lambda_2)$ implies
	\[
		b(t)+c(t)=\left(t^2+4\.t+3\right)h'(t)+2t\.(t+1)^2\.h''(t)\geq 0
	\]
	for all $t\in(0,\infty)$. 
\end{corollary}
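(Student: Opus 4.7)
My plan is to leverage condition 4) of Theorem \ref{theorem:main}, where the second alternative $a(t) + [b(t)+c(t)]\.f_0 \geq 0$ contains $f_0$ linearly. If I can drive $f_0$ to $+\infty$ while keeping rank-one convexity of the associated energy (and keeping $h$, hence $a,b,c$, fixed), then the coefficient $b(t)+c(t)$ must be nonnegative or the inequality fails for large $f_0$.

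The key idea is a simple perturbation: for any constant $K \geq 0$, consider
\[
	W_K(F) \colonequals W(F) - K\.\log(\det F) = h\bigg(\frac{\lambda_1}{\lambda_2}\bigg) + f_K(\lambda_1\lambda_2), \qquad f_K(z) \colonequals f(z) - K\.\log z.
\]
The map $z \mapsto -K\.\log z$ is convex on $(0,\infty)$ (since its second derivative is $K/z^2\geq 0$), and $F \mapsto \det F$ is rank-one affine, so $F \mapsto -K\log(\det F)$ is rank-one convex. Hence $W_K$ is a sum of rank-one convex functions and therefore rank-one convex itself. A direct computation gives $f_K''(z) = f''(z) + K/z^2$, so $z^2 f_K''(z) = z^2 f''(z) + K$ and consequently
\[
	f_{0,K} \colonequals \inf_{z \in (0,\infty)} z^2\.f_K''(z) = f_0 + K.
\]

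Now I apply condition 4) of Theorem \ref{theorem:main} to the rank-one convex energy $W_K$, noting that $h$, and thus the functions $a(t), b(t), c(t)$, are unchanged. At each fixed $t \in (0,\infty)$, at least one of
\[
	\frac{2t}{t+1}\.h'(t) + t^2\.h''(t) - (f_0 + K) \geq 0 \qquad \text{or} \qquad a(t) + [b(t) + c(t)]\.(f_0 + K) \geq 0
\]
must hold. Since the first alternative contains $-K$ and all other quantities are independent of $K$, for $K$ sufficiently large (depending on $t$) the first alternative fails. Hence the second must hold for all such $K$; dividing by $K$ and passing to the limit $K \to \infty$ forces $b(t) + c(t) \geq 0$. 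As $t$ was arbitrary, this proves the corollary.

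The main obstacle is really just spotting the right perturbation: once one notices that adding $-K\log(\det F)$ preserves rank-one convexity while shifting $f_0$ upward by exactly $K$, the rest of the argument is an immediate consequence of the linearity in $f_0$ of the second alternative in condition 4). No delicate case analysis or differential inequality manipulation is required.
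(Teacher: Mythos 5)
Your proof is correct and follows essentially the same route as the paper: the authors also perturb $W$ by a convex volumetric term $\fhat_\eta$ with $z^2\fhat_\eta''(z)\equiv\eta$ (which is precisely your $-K\log z$ up to affine terms), observe that rank-one convexity is preserved while $f_0$ shifts to $f_0+\eta$, and then let $\eta\to\infty$ in condition 4) of Theorem \ref{theorem:main} to force $b(t)+c(t)\geq 0$. Your only addition is making the perturbing function explicit, which changes nothing of substance.
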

\begin{proof}
	For arbitrary $\eta>0$, consider the rank-one convex class of energy functions
	\begin{align}
		W_\eta(F)\colonequals W(F)+\fhat_\eta(z)\,,
	\end{align}
	such that the convex volumetric part $\fhat_\eta\col(0,\infty)\to\R$ satisfies $z^2\.\fhat_\eta''(z)\equiv\eta$. Then
	\begin{align}
		\inf_{z\in(0,\infty)}z^2\left(f''(z)+\fhat_\eta''(z)\right)=f_0+\eta\,.
	\end{align}
	If $b(t_0)+c(t_0)<0$ for some arbitrary $t_0\in(0,\infty)$, then condition 4) of Theorem \ref{theorem:main} for the energy $W_\eta(F)$, which reads
	\begin{align}
		\frac{2\.t_0}{t_0+1}\.h'(t_0)+t_0^2\.h''(t_0)-f_0\underbrace{-\eta}_{<0}\geq 0\qquad\text{or}\qquad a(t_0)+\underbrace{\left[b(t_0)+c(t_0)\right]}_{<0}\big(f_0\underbrace{+\eta}_{>0}\big)\geq 0\,,
	\end{align}
	cannot be satisfied for $\eta\to\infty$, in contradiction to the rank-one convexity of the energy function $W_\eta(F)$ for all $\eta>0$. Thus $b(t)+c(t)\geq 0$ for all $t\in(0,\infty)$.
\end{proof}
\subsection{Further necessary conditions for rank-one convexity}
We continue the discussion with a list of necessary conditions for rank-one convexity of an energy with volumetric-isochoric split in terms of the isochoric part $h$ and volumetric part $f$.
\begin{lemma}\label{lemma:necessary}
	Consider an arbitrary isotropic rank-one convex energy function $W\col\GLp(2)\to\R$ with additive volumetric-isochoric split $W(F)=h\bigl(\frac{\lambda_1}{\lambda_2}\bigr)+f(\lambda_1\lambda_2)$. Then
	\begin{itemize}
		\item[a)] $h$ is globally convex or $f$ is globally convex.
		\item[b)] the mapping $x\mapsto h(x)+f(x)$ is globally convex on $(0,\infty)$.
		\item[c)] $h'(t)\geq 0$ for all $t>1$ and $h'(t)\leq 0$ for all $t<1$, i.e.\ $h$ is monotone increasing on $[1,\infty)$ and monotone decreasing on $(0,1]$.
		\item[d)] $t\.h''(t)+h'(t)\geq 0$ for all $t>0$. The inequality is strict if the energy is strictly rank-one convex.
		\item[e)] $\left(t+3\right)h'(t)+2t\.(t+1)\.h''(t)\geq 0$ for all $t>0$.
	\end{itemize}
\end{lemma}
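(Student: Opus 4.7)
My approach is to read each of (a), (b), (c), (e) off a single item of Theorem~\ref{theorem:main} or Corollary~\ref{lemma:bcgeqzero}, and then to extract (d) algebraically from (c) and (e) rather than from rank-one convexity directly.

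For (a), I apply condition~1) of Theorem~\ref{theorem:main}: if neither $h$ nor $f$ were convex, there would exist $t_0,z_0\in(0,\infty)$ with $h''(t_0)<0$ and $f''(z_0)<0$, forcing $h_0<0$ and $f_0<0$, which contradicts $h_0+f_0\geq 0$. For (b), I specialise condition~A) of Lemma~\ref{lemma:voliso} to $t=z=x$, obtaining $x^2(h''(x)+f''(x))\geq 0$ and hence global convexity of $x\mapsto h(x)+f(x)$ on $(0,\infty)$. For (c), condition~2) of Theorem~\ref{theorem:main} directly provides $h'(t)\geq 0$ on $[1,\infty)$; differentiating the isotropy-induced symmetry $h(t)=h(1/t)$ as in~\eqref{eq:symmetryofh} yields $h'(t)=-t^{-2}\.h'(1/t)$, which transfers the sign and gives $h'(t)\leq 0$ on $(0,1]$. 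Item (e) is exactly the conclusion of Corollary~\ref{lemma:bcgeqzero}, after dividing $b(t)+c(t)=(t+1)[(t+3)h'(t)+2t(t+1)h''(t)]\geq 0$ by the positive factor $t+1$.

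The only non-routine point is (d), and the key is the algebraic identity
\[
	\bigl[(t+3)\.h'(t)+2t(t+1)\.h''(t)\bigr]\;-\;2(t+1)\bigl(h'(t)+t\.h''(t)\bigr)\;=\;(1-t)\.h'(t),
\]
which rearranges to $2(t+1)\bigl(h'(t)+t\.h''(t)\bigr)=\bigl[(t+3)h'(t)+2t(t+1)h''(t)\bigr]-(1-t)\.h'(t)$. The bracket is non-negative by (e), and by (c) the sign of $h'(t)$ agrees with that of $t-1$, so $(1-t)\.h'(t)\leq 0$ for every $t>0$; dividing by $2(t+1)>0$ then yields $h'(t)+t\.h''(t)\geq 0$, which is (d). For the strict version claimed in the lemma, strict rank-one convexity makes (e) strict, and the same identity then forces strict (d). The main obstacle I anticipate is simply spotting this identity that exhibits (d) as a combination of (c) and (e); once it is in hand, everything else reduces to inspection of Theorem~\ref{theorem:main} and its corollary.
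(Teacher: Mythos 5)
Your items a), b), c), e) are correct. For a), c), e) you follow essentially the same route as the paper (read them off Theorem~\ref{theorem:main} and Corollary~\ref{lemma:bcgeqzero}); for b) you specialise condition A) of Lemma~\ref{lemma:voliso} to $t=z=x$, while the paper argues convexity of $W$ along the rank-one line $s\mapsto\diag(a+s,1)$ — these are the same inequality $g_{xx}(x,1)\geq0$ in different clothing, so both are fine.

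The genuinely different step is d). The paper restricts $W$ to $\SL(2)$, writes $\WSL(F)=\phi(\lmax-1/\lmax)$, invokes the characterization that $\WSL$ is rank-one convex iff $\phi$ is monotone and convex, and computes $t\.h''(t)+h'(t)$ explicitly in terms of $\phi'$ and $\phi''$. Your derivation instead uses the identity
\[
	2(t+1)\bigl(h'(t)+t\.h''(t)\bigr)=\bigl[(t+3)\.h'(t)+2t(t+1)\.h''(t)\bigr]+(t-1)\.h'(t),
\]
where the bracket is nonnegative by e) and $(t-1)\.h'(t)\geq0$ by c). This is correct (the identity checks out) and is more elementary: it needs no external result about incompressible planar elasticity, only the already-established items c) and e). That is a nice observation the paper does not make.

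However, there is a genuine gap in your treatment of the \emph{strict} part of d). You claim that strict rank-one convexity makes e) strict and that the identity then forces strict d). Neither half of this is justified. Corollary~\ref{lemma:bcgeqzero} is proved by a limiting argument ($\eta\to\infty$) which can only ever yield the non-strict inequality $b(t)+c(t)\geq0$; moreover, because condition~4) of Theorem~\ref{theorem:main} is a disjunction, having $b(t_0)+c(t_0)=0$ with $a(t_0)\geq0$ is not excluded by that argument even under strict rank-one convexity. And even granting strict e), your identity would not close the case $t=1$ (where $h'(1)=0$ makes the second summand vanish, so you would separately need $h''(1)>0$) nor the case of a $t_0\neq1$ with $h'(t_0)=0$, where necessarily $h''(t_0)=0$ as well and the target inequality $t_0\.h''(t_0)+h'(t_0)>0$ fails outright — so strictness must be shown to \emph{exclude} such points, which cannot be extracted from c) and e) alone. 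The paper avoids all of this by obtaining $\phi''>0$ from strict rank-one convexity of $\WSL$, whence $\phi'>0$ away from the origin and the explicit formula for $t\.h''(t)+h'(t)$ is a sum of a positive and a nonnegative term. Your non-strict argument stands, but the strict claim needs an argument of this latter type.
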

\begin{proof}
	\textbf{Condition a)} The separate convexity condition 1) in Theorem \ref{theorem:main} states
	\begin{align}
		h_0+f_0\geq 0\qquad\implies\qquad &h_0\geq 0\quad \text{or}\quad f_0\geq 0\notag\\
		\iff\qquad &t^2\.h''(t)\geq 0\quad\text{for all }t\in(0,\infty)\quad \text{or}\quad z^2\.f''(z)\geq 0\quad\text{for all }z\in(0,\infty)\notag\\
		\iff\qquad &h''(t)\geq 0\quad\text{for all }t\in(0,\infty)\quad \text{or}\quad f''(z)\geq 0\quad\text{for all }z\in(0,\infty)\,.
	\end{align}
	\textbf{Condition b)} For any $a>0$, rank-one convexity of $W$ implies the convexity of $W$ along the straight line $t\mapsto\diag(a,1)+t\diag(1,0)$ in rank-one direction $\diag(1,0)$, i.e.\ the convexity of the mapping
	\begin{align}
		t\mapsto W(\diag(a+t,1))=g(a+t,1)=h\bigg(\frac{a+t}{1}\bigg)+f\big((a+t)\cdot 1\big)=h(a+t)+f(a+t)
	\end{align}
	and thus the convexity of the mapping $x\mapsto h(x)+f(x)$.\par
	\textbf{Condition c)} The monotonicity condition is already known as condition 2) in Theorem \ref{theorem:main} (as well as Lemma \ref{lemma:voliso}).\par
	\textbf{Condition d)} The inequality can be obtained by considering the restriction $\WSL=W|_{\SL(2)}$ of $W$ to the special linear group $\{F\in\GLp(2) \setvert \det F = 1\}$. Then rank-one convexity of $W$ implies rank-one convexity \cite{agn_martin2019quasiconvex} of $\WSL$ with respect to $\SL(2)$. Recall that any function $\WSL\col\SL(2)\to\R$ can be expressed in the form \eqref{eq:incompressibleEnergyRepresentation}, i.e.
	\[
		\WSL(F) = \phi\left(\lmax-\frac{1}{\lmax}\right)=\phi\left(\frac{\lmax}{\sqrt{\lmax\lmin}}-\frac{\sqrt{\lmax\lmin}}{\lmax}\right)=\phi\left(\sqrt{\frac{\lmax}{\lmin}}-\sqrt{\frac{\lmin}{\lmax}}\right)
	\]
	with $\phi\col[0,\infty)\to\R$, and $\WSL$ is rank-one convex on $\SL(2)$ if and only if $\phi$ is monotone and convex \cite{agn_martin2019quasiconvex}. For $\WSL=W|_{\SL(2)}$, we find
	\[
		\phi\left(\sqrt{\frac{\lmax}{\lmin}}-\sqrt{\frac{\lmin}{\lmax}}\right) = \WSL(F) = W(F) = h\bigg(\frac{\lmax}{\lmin}\bigg) + f(1)
	\]
	for all $F\in\SL(2)$ with singular values $\lmax>\lmin$ and thus
	\[
		h(t) 
		= \begin{cases}
			\phi\left(\sqrt{t}-\frac{1}{\sqrt{t}}\right) - f(1) &: t\geq1\,,\\
			\phi\left(\frac{1}{\sqrt{t}}-\sqrt{t}\right) - f(1) &: t<1\,.
		\end{cases}
	\]
	Then for any $t\geq1$,
	\begin{align}
		h'(t)&=\ddt \phi\left(\sqrt{t}-\frac{1}{\sqrt{t}}\right)=\phi'\left(\sqrt{t}-\frac{1}{\sqrt{t}}\right)\.\left(\frac{1}{2\.\sqrt{t}}+\frac{1}{2\.\sqrt{t^3}}\right),\\
		h''(t)&=\ddt\left[\frac{1}{2}\.\phi'\left(\sqrt{t}-\frac{1}{\sqrt{t}}\right)\.\left(\frac{1}{\sqrt{t}}+\frac{1}{\sqrt{t^3}}\right)\right]\notag\\
		&=\frac{1}{4}\.\phi''\left(\sqrt{t}-\frac{1}{\sqrt{t}}\right)\left(\frac{1}{\sqrt{t}}+\frac{1}{\sqrt{t^3}}\right)^2+\frac{1}{2}\phi'\left(\sqrt{t}-\frac{1}{\sqrt{t}}\right)\left(-\frac{1}{2\.\sqrt{t^3}}-\frac{3}{2\.\sqrt{t^5}}\right)
	\end{align}
	and thus
	\begin{align}
		t\.h''(t)+h'(t)&=\frac{1}{4}\.\phi''\left(\sqrt{t}-\frac{1}{\sqrt{t}}\right)t\left(\frac{1}{\sqrt{t}}+\frac{1}{\sqrt{t^3}}\right)^2+\frac{1}{2}\phi'\left(\sqrt{t}-\frac{1}{\sqrt{t}}\right)\left(-\frac{1}{2\.\sqrt{t}}-\frac{3}{2\.\sqrt{t^3}}+\frac{1}{\sqrt{t}}+\frac{1}{\sqrt{t^3}}\right)\notag\\
		&=
			\frac{1}{4}\.\underbrace{\phi''\left(\sqrt{t}-\frac{1}{\sqrt{t}}\right)}_{\geq 0 \text{ ($\phi$ convex)}}
			\cdot\,\underbrace{t\left(\frac{1}{\sqrt{t}}+\frac{1}{\sqrt{t^3}}\right)^2}_{>0 \text{ for } t>0}
			+ \frac{1}{4} \underbrace{\phi'\left(\sqrt{t}-\frac{1}{\sqrt{t}}\right)}_{\geq 0 \text{ ($\phi$ monotone)}}
			\cdot\,\underbrace{\left(\frac{1}{\sqrt{t}}-\frac{1}{\sqrt{t^3}}\right)}_{\geq 0 \text{ for } t\geq 1}\geq 0
	\end{align}
	due to the rank-one convexity of $\WSL$. For $t\leq 1$, the inequality can be obtained analogously. If the energy is strictly rank-one convex, it holds $\phi''(x)>0$ for all $x>0$ and thus $t\.h''(t)+h'(t)>0$ for all $t>0$.\par
	\textbf{Condition e)} The inequality follows directly from Lemma \ref{lemma:bcgeqzero} where it is shown that rank-one convexity implies
	\begin{align}
		b(t)+c(t)&=\left(t^2+3\right)h'(t)+2t\.(t^2+1)\.h''(t)+4t\left(h'(t)+t\.h''(t)\right)\notag\\
		&=\left(t^2+4t+3\right)h'(t)+2t\.(t+1)^2\.h''(t)=(t+1)\.\left[\left(t+3\right)h'(t)+2t\.(t+1)\.h''(t)\right]\geq 0\\
		\iff\qquad &\left(t+3\right)h'(t)+2t\.(t+1)\.h''(t)\geq 0\qquad\text{for all } t\in(0,\infty)\,.\hspace{10cm}\qedhere\hspace{-5cm}
	\end{align}
\end{proof}
%
%
%
%
%
\subsection{Application to generalized Hadamard energies}
In particular, our rank-one characterization applies to the family of planar isotropic functions
\begin{align}
	W\col\GLp(2)\to\R\,,\quad W(F)=\underbrace{\mu\,\K(F)}_{\mathclap{\text{rank-one convex}}}+f(\det F)\,,\quad\;\;\mu>0\,,\;\;\;\K(F)\colonequals\frac{1}{2}\.\frac{\norm{F}^2}{\det F}=\frac{1}{2}\left\|\frac{F}{\sqrt{\det F}}\right\|^2,\label{eq:muKenergy}
\end{align}
Here, $\K\geq 1$ is the so-called \emph{distortion function}, which plays an important role in the theory of quasiconformal mappings \cite{astala2008elliptic,agn_martin2019envelope}. Note that $\K\equiv 1$ if and only if $\varphi$ is conformal. It is well known that the mapping $F\mapsto\K(F)$ itself is rank-one convex and even polyconvex on $\GLp(2)$. Using our characterization for rank-one convexity of volumetric-isochoric split energies, we obtain the following lemma.
\begin{lemma}\label{lemma:Kenergy}
	The energy function $W\col\GLp(2)\to\R$ with $W(F)=\mu\,\K(F)+f(\det F)$ is rank-one convex if and only if $f$ is convex.
\end{lemma}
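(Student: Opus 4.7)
The plan is to cast $W$ into volumetric-isochoric form and apply Theorem \ref{theorem:main} with an explicit isochoric profile. Since $\norm{F}^2 = \lambda_1^2 + \lambda_2^2$ and $\det F = \lambda_1\.\lambda_2$, a short computation gives $\mu\.\K(F) = \frac{\mu}{2}\bigl(\frac{\lambda_1}{\lambda_2} + \frac{\lambda_2}{\lambda_1}\bigr)$, so in the notation of the theorem the isochoric profile is $h(t) = \frac{\mu}{2}\bigl(t + \frac{1}{t}\bigr)$, with $h'(t) = \frac{\mu}{2}\bigl(1 - t^{-2}\bigr)$ and $h''(t) = \mu\.t^{-3}$. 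This $h$ is smooth, satisfies $h(t) = h(1/t)$, is strictly convex, and monotone increasing on $[1,\infty)$; the single identity that makes the lemma collapse is $t^2\.h''(t) = \mu/t$, yielding $h_0 := \inf_{t>0} t^2\.h''(t) = 0$.

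The ``only if'' direction then drops out of condition 1) of Theorem \ref{theorem:main}: rank-one convexity of $W$ forces $h_0 + f_0 \geq 0$, and since $h_0 = 0$ this collapses to $f_0 \geq 0$, i.e.\ $f''(z) \geq 0$ for all $z > 0$, so $f$ is convex.

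For the ``if'' direction the cleanest route is to observe that $F \mapsto \mu\.\K(F)$ is polyconvex on $\GLp(2)$ (as noted just before the lemma) and that, for convex $f$, the mapping $F \mapsto f(\det F)$ is polyconvex as well (a convex function of the single minor $\det F$); their sum is polyconvex, hence rank-one convex. As a sanity check one may instead verify conditions 1)--4) of Theorem \ref{theorem:main} directly with the above $h$: both $\frac{2t}{t-1}\.h'(t) - t^2\.h''(t)$ and $\frac{2t}{t+1}\.h'(t) + t^2\.h''(t)$ collapse to the constant $\mu$, and a short computation yields $a(t) \equiv 0$ together with $b(t)+c(t) = \mu(t+1)^4/(2t^2) > 0$, so conditions 3) and 4) follow from $f_0 \geq 0$ via their first and second clauses respectively.

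I expect no real obstacle: once $h$ is written down, the single miracle is the identity $h_0 = 0$, which decouples rank-one convexity into a pure convexity statement about $f$, with no residual coupling constant left over.
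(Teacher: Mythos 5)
Your proof is correct and follows essentially the same route as the paper: both reduce to the singular-value representation with $h(t)=\frac{\mu}{2}\left(t+\frac{1}{t}\right)$, obtain the ``only if'' direction from condition 1) of Theorem \ref{theorem:main} via the key identity $h_0=\inf_{t>0}\mu/t=0$, and get the ``if'' direction by noting that the isochoric term is rank-one convex (you via polyconvexity, the paper via convexity and monotonicity of $h$ --- both facts are stated just before the lemma) so that $W$ is a sum of rank-one convex functions. Your additional direct verification of conditions 1)--4) is a correct bonus not present in the paper's proof.
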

\begin{proof}
	We first observe that
	\begin{align*}
		\K(F)&=\frac{1}{2}\.\frac{\norm{F}^2}{\det F}=\frac{1}{2}\.\frac{\lambda_1^2+\lambda_2^2}{\lambda_1\lambda_2}=\frac{1}{2}\left(\frac{\lambda_1}{\lambda_2}+\frac{\lambda_2}{\lambda_1}\right)\\
		\implies\quad W(F) &= h\left( \frac{\lambda_1}{\lambda_2} \right) + f(\lambda_1\.\lambda_2) \qquad\text{with}\quad h(t)=\frac{1}{2}\left(t+\frac{1}{t}\right)
	\end{align*}
	and calculate the derivatives $h'(t)=\frac{1}{2}\left(1-\frac{1}{t^2}\right)$ and $h''(t)=\frac{1}{t^3}\,.$ Thus $h$ is globally convex and (strictly) increasing for $t>1$, hence the isochoric term $\mu\.\K$ is rank-one convex. If the volumetric term $f$ is convex as well, the energy function $W$ is rank-one convex as a sum of two rank-one convex functions.
	
	Now, in order to show the reverse implication, let $W$ be rank-one convex. Then $W$ is separately convex, which is expressed by condition 1) of Theorem \ref{theorem:main} as $h_0+f_0\geq 0$. We compute
	\begin{align*}
		h_0=\inf_{t\in(0,\infty)}t^2\.h''(t)=\inf_{t\in(0,\infty)}t^2\.\frac{1}{t^3}=\inf_{t\in(0,\infty)}\frac{1}{t}=0\,,
	\end{align*}
	and thus
	\begin{align*}
		f_0=\inf_{z\in(0,\infty)}z^2\.f''(z)\geq 0.
	\end{align*}
	Therefore, the function $f$ must be convex (which implies that the volumetric part $F\mapsto f(\det F)$ is rank-one convex).
\end{proof}
The result of Lemma \eqref{lemma:Kenergy} should be compared with the particular planar and quadratic case \cite[Theorem 5.58ii)]{Dacorogna08,ball1984w1,grabovsky2018rank} $W\col\GLp(2)\to\R$ of the Hadamard-material, for which
\begin{align}
	W(F)=\frac{\mu}{2}\norm{F}^2+f(\det F)\qquad\text{rank-one convex}\qquad\iff\qquad f\text{ is convex}\,,\;\mu>0\,.
\end{align}
In an intriguing recent contribution \cite{grabovsky2018explicit,grabovsky2016legendre}, the quasiconvex envelope for the Hadamard material has been computed analytically for a non-convex function $f$ and sufficiently large shear modulus $\mu$. It would be interesting to obtain a similar relaxation result for $W(F)=\frac{\mu}{2}\.\frac{\norm{F}^2}{\det F}+f(\det F)$.
%
%
%
%
%
\subsection{Idealized planar isotropic nonlinear energy function}
From a modeling perspective, it is a useful idealization to assume that the isochoric and volumetric response are governed by the same functional form. Consider an energy function $W\col\GLp(2)\to\R$ of the form
\begin{equation}
\label{eq:energyh}
	W(F) = \mu\.h\bigg(\frac{\lambda_1}{\lambda_2}\bigg) + \frac{\kappa}{2}\.h(\lambda_1\lambda_2)\,,
\end{equation}
where $h\in C^2((0,\infty);\R)$ with $h(t)=h\bigl(\frac{1}{t}\bigr)$ for all $t\in(0,\infty)$ and $\mu,\kappa>0$. This class of energies includes, for example, the Hencky energy \eqref{eq:hencky} and the exponentiated Hencky energy \eqref{eq:expHencky}. Note that any energy $W$ of this form is \emph{tension-compression symmetric}, i.e.\ satisfies $W(F)=W(F\inv)$ for all $F\in\GLp(2)$.

Theorem \ref{theorem:main} immediately shows that an energy of the form \eqref{eq:energyh} is rank-one convex if and only if $h$ is convex. In this case, the volumetric and isochoric parts are rank-one convex, individually. Thus for this particular type of split energy function, rank-one convexity is indeed equivalent to rank-one convexity of both the volumetric and the isochoric part. Moreover, according to the results of Section \ref{section:Cauchy}, the corresponding Cauchy stress-stretch response is locally invertible if $h$ is uniformly convex.
%
%
%
%
%
\subsection{Examples of non-trivial rank-one convex energies}\label{sec:examples}
In contrast to the class \eqref{eq:muKenergy} of functions, it is possible to find rank-one convex energies with either a non rank-one convex volumetric part $\Wvol$ or non rank-one convex isochoric part $\Wiso$. As an example, consider the energy
\begin{align}
	W\col\GLp(2)\to\R\,,\quad W(F)=e^{\frac{1}{10}\.(\log t)^2}+\frac{1}{60}\left(z-\frac{1}{z}\right)^2,\qquad t=\frac{\lambda_1}{\lambda_2}\,,\quad z=\det F\,.\label{eq:exampleEnergy}
\end{align}
The isochoric part $\Wiso=e^{\frac{1}{10}\left(\log \frac{\lambda_1}{\lambda_2}\right)^2}$ is not rank-one convex, cf. Figure \ref{fig:notRankOneconvex}. However, with formula \eqref{eq:linformula} the infinitesimal shear modulus\footnote{Here, $\Psi$ is given by the representation $W(F)=\psi\big(\K(F)\big)+f(\det F)$ of $W$ with $\psi\col[1,\infty)\to\R$ and $\K(F)=\frac{1}{2}\.\frac{\norm{F^2}}{\det F}$. In our example, $\psi(\K)=e^{\frac{1}{10}\left[\arcosh\K(F)\right]^2}$ and $\psi'(\K)=\frac{1}{5}\.e^{0.1\left[\arcosh\K(F)\right]^2}\frac{\arcosh\K(F)}{\sqrt{\K^2-1}}$.} $\mu=\psi'(1)=\frac{1}{5}>0$ and the infinitesimal bulk modulus $\kappa=f''(1)=\frac{2}{15}>0$ imply rank-one convexity in the infinitesimal case. Similar to the non rank-one convex energy in Figure \ref{fig:notRankOneconvex}, numerical results for the Knowles-Sternberg criterion directly suggest that the energy \eqref{eq:exampleEnergy} is everywhere rank-one convex on $\GLp(2)$. In order to prove rank-one convexity with Theorem \ref{theorem:main}, we compute
\begin{align}
	f(z)&=\frac{1}{60}\left(z-\frac{1}{z}\right)^2,\qquad f'(z)=\frac{1}{30}\left(z-\frac{1}{z}\right)\left(1+\frac{1}{z^2}\right)=\frac{1}{30}\left(z-\frac{1}{z^3}\right),\notag\\
	z^2\.f''(z)&=\frac{z^2}{30}\left(1+\frac{3}{z^4}\right)=\frac{1}{30}\left(z^2+\frac{3}{z^2}\right)
\end{align}
and determine the constant $f_0$:
\begin{align}
	\ddz\.z^2\.f''(z)&=\frac{1}{15}\left(z-\frac{3}{z^3}\right)\overset{!}{=}0\qquad\iff\qquad z^4-2=0\qquad\iff\qquad z=\sqrt[4]{3}\notag\\
	\implies\qquad f_0&\colonequals\inf_{z\in(0,\infty)}z^2\.f''(z)=\frac{1}{30}\left(\sqrt{3}+\frac{3}{\sqrt{3}}\right)=\frac{\sqrt{3}}{15}\approx0.11547\,.
\end{align}
Furthermore, numerical calculations yield $h_0\colonequals\inf_{t\in(0,\infty)}t^2\.h''(t)\approx-0.101677$, thus $h_0+f_0>0$. In addition, we compute
\begin{align*}
	h(t)=e^{\frac{1}{10}\.(\log t)^2},\qquad h'(t)=\frac{\log t}{5t}\.e^{\frac{1}{10}\.(\log t)^2}\geq 0\qquad\text{for all }t\geq 1\,.
\end{align*}
The remaining conditions 3) and 4) of Theorem \ref{theorem:main} are too complex to be solved analytically, but are numerically visualized in Figure \ref{fig:RankOneConvex}.
\begin{figure}[h!]
	\vspace{-1.5cm}
	\begin{minipage}[t]{1\linewidth}
	   \centering
	   \includegraphics[width=\textwidth]{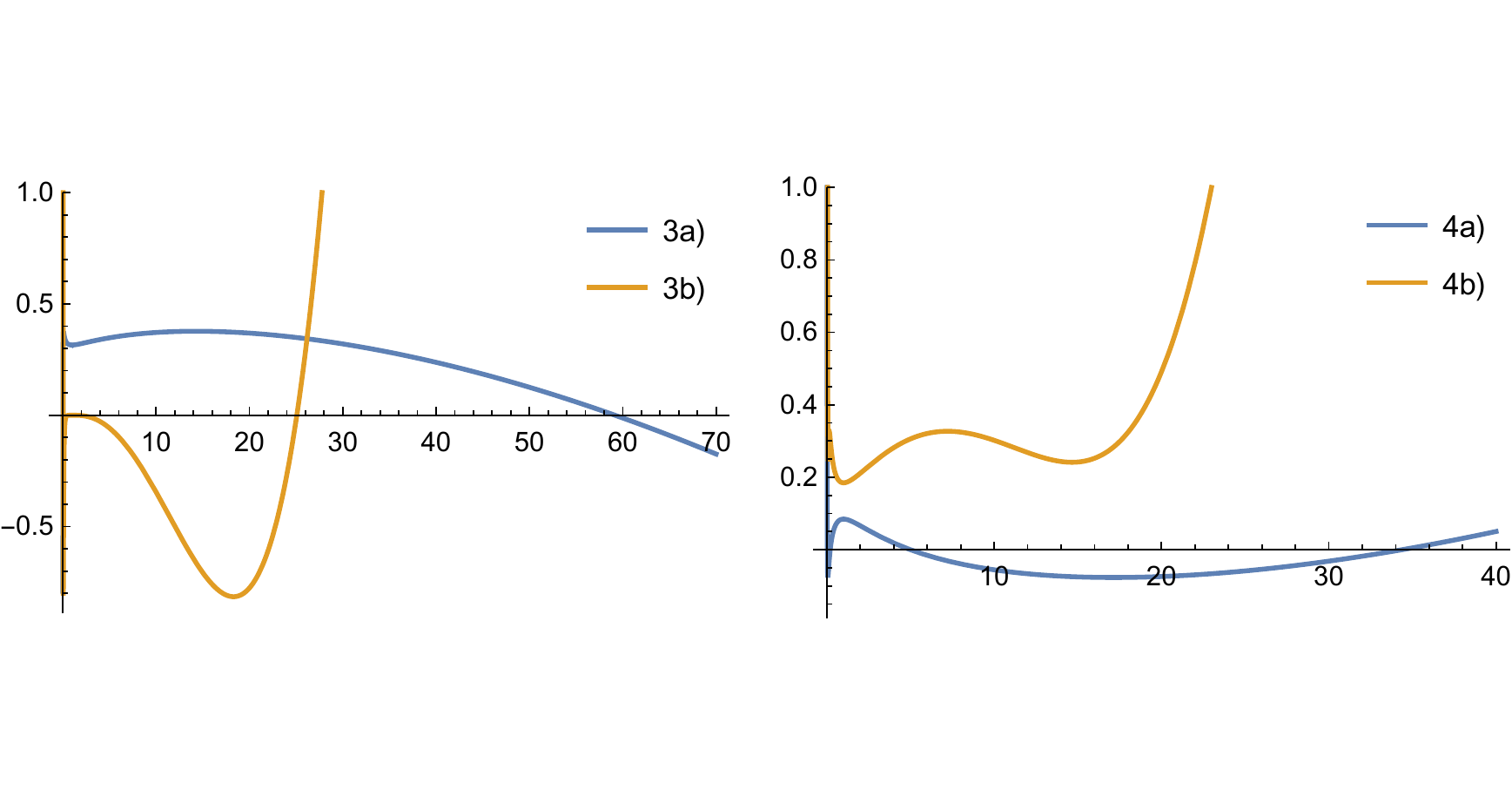}
	\end{minipage}
	\vspace{-2cm}
  \caption{\label{fig:RankOneConvex} Visualization of the rank-one convexity conditions 3) and 4) of Theorem \ref{theorem:main}. Note that for each condition and each $t\in(0,\infty)$, only one of the two terms must be positive, which is the case here.}
\end{figure}

As a second example, consider the energy
\begin{align}
	W\col\GLp(2)\to\R\,,\quad W(F)=\frac{6}{5}\left(t-\frac{1}{t}\right)^2+\left(z-\frac{1}{z}\right)^4-\left(z-\frac{1}{z}\right)^2,\qquad t=\frac{\lambda_1}{\lambda_2}\,,\quad z=\det F\label{eq:exampleEnergy2}
\end{align}
with a non rank-one convex volumetric part $\Wvol$ in the form of a double-well potential. Again, with formula \eqref{eq:linformula} the infinitesimal shear modulus\footnote{Here, $\psi(\K)=\frac{6}{5}\left(\K+\sqrt{\K^2-1}-\frac{1}{\K+\sqrt{\K^2-1}}\right)^2=\frac{24}{5}\left(\K^2-1\right)$ and $\psi'(\K)=\frac{48}{5}\.\K$
.} $\mu=\psi'(1)=\frac{48}{5}>0$ and the infinitesimal bulk modulus $\kappa=f''(1)=-8<0$ imply rank-one convexity in the infinitesimal case. Similar to the non rank-one convex energy in Figure \ref{fig:notRankOneconvex}, employing numerical calculations for the Knowles-Sternberg criterion directly suggests that the energy \eqref{eq:exampleEnergy2} is everywhere rank-one convex on $\GLp(2)$. Again, the rank-one convexity of $W$ can be shown using Theorem \ref{theorem:main}. First, we compute
\begin{align}
	f(z)&=\left(z-\frac{1}{z}\right)^4-\left(z-\frac{1}{z}\right)^2=\frac{1}{z^4}-\frac{5}{z^2}+8-5\.z^2+z^4\,,\qquad f'(z)=-\frac{4}{z^5}+\frac{10}{z^3}-10\.z+4\.z^3,\notag\\
	z^2\.f''(z)&=\frac{20}{z^4}-\frac{30}{z^2}-10\.z^2+12\.z^4\,.
\end{align}
For $z=1$, we obtain the infimum $f_0\colonequals\inf_{z\in(0,\infty)}z^2\.f''(z)=-8$, whereas $h_0\colonequals\inf_{t\in(0,\infty)}t^2\.h''(t)=\frac{24\sqrt{3}}{5}\approx 8.31384$ can be calculated similarly to the value of $f_0$ in Example \eqref{eq:exampleEnergy}. Thus $h_0+f_0>0$. In addition, we compute
\begin{align*}
	h(t)=\frac{6}{5}\left(t-\frac{1}{t}\right)^2,\qquad h'(t)=\frac{12}{5}\left(t-\frac{1}{t^3}\right)\geq 0\qquad\text{for all }t\geq 1\,.
\end{align*}
Again, the remaining conditions 3) and 4) of Theorem \ref{theorem:main} are too complex to allow for analytical solutions, but are easily verified numerically similar to the case shown in Figure \ref{fig:RankOneConvex}.
%
%
%
%
%
\newcommand{\ha}{h'\bigg(\frac{\lambda_1}{\lambda_2}\bigg)}
\newcommand{\hb}{h''\bigg(\frac{\lambda_1}{\lambda_2}\bigg)}
\newcommand{\fa}{f'(\lambda_1\lambda_2)}
\newcommand{\fb}{f''(\lambda_1\lambda_2)}
\section{Invertibility of the Cauchy stress tensor in planar elasticity}\label{section:Cauchy}
The volumetric-isochoric split allows for a simplified investigation of invertibility for the Cauchy stress-stretch law \cite{agn_schweickert2019nonhomogeneous} as well. First, recall that in planar linear isotropic elasticity, the energy potential and the Cauchy stress response are given by
\begin{align}
	\Wlin(\nabla u)&=\Wliniso(\nabla u)+\Wlinvol(\nabla u)=\mu\.\norm{\dev_2\sym\nabla u}^2+\frac{\kappa}{2}\.(\tr\nabla u)^2,\\
	\sigmalin(\nabla u)&=\sigmaliniso(\nabla u)+\sigmalinvol(\nabla u)=D\Wlin(\nabla u)=2\mu\.\dev_2\sym\nabla u+\kappa\.\tr(\nabla u)\.\id\,,\notag
\end{align}
respectively. Then \cite{agn_neff2016injectivity,agn_mihai2016hyperelastic,agn_mihai2018hyperelastic}
\begin{align}
	\Wlin\quad\text{strictly convex}\qquad\iff\qquad\mu>0\,,\;\kappa>0\qquad\iff\qquad\sigmalin\quad\text{invertible.}
\end{align}
\begin{lemma}
	Assume that $\Wlin$ is strictly rank-one convex and $\kappa>0$. Then $\mu>0$ (cf.\ Appendix \ref{appendix:lin}), $\kappa>0$, and the Cauchy stress-strain law is uniquely invertible.
\end{lemma}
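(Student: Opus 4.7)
The plan is to split the claim into two pieces that can be handled independently: (i) positivity of the Lamé parameters, and (ii) invertibility of the linear stress-strain mapping. For (i), I would simply invoke Appendix \ref{appendix:lin}, where rank-one convexity of $\Wlin$ is characterized by $\mu \geq 0$ and $\mu + \kappa \geq 0$, with the strict versions corresponding to strict rank-one convexity. Since $\Wlin$ is assumed strictly rank-one convex, this immediately yields $\mu > 0$; combined with the hypothesis $\kappa > 0$, the first half of the conclusion is already established.

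For (ii), the key observation is that the Cauchy stress is already presented in additively split form
\[
    \sigmalin(\nabla u) = 2\mu\,\dev_2\sym\nabla u + \kappa\,\tr(\nabla u)\.\id\,,
\]
which decouples into a deviatoric part (carrying the shear response) and a spherical part (carrying the volumetric response). Applying $\dev_2$ to this identity isolates $\dev_2\sym\nabla u$ with coefficient $2\mu$, while taking the trace isolates $\tr(\nabla u)$ with coefficient $2\kappa$ (using $\tr\id = 2$ in two dimensions). Since $\mu > 0$ and $\kappa > 0$, both coefficients are nonzero and each strain component can be recovered from the corresponding stress component, giving an explicit closed-form inverse for $\sym\nabla u$ in terms of $\sigmalin$.

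No real obstacle is expected here; the argument is essentially the linearized shadow of the volumetric-isochoric decoupling emphasized throughout the paper, and the rank-one convexity bound is exactly what is needed to make the shear coefficient strictly positive. The only subtlety worth flagging is the correct reading of \emph{invertibility}: by objectivity the skew-symmetric part of $\nabla u$ lies in the kernel of $\sigmalin$, so the stress-strain law is only invertible on $\Sym(2)$ (equivalently, modulo infinitesimal rotations), which is the standard convention in linear elasticity and matches the corresponding nonlinear framework discussed in Section \ref{section:Cauchy}.
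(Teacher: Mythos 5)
Your argument is correct and follows essentially the same route the paper intends: the lemma is stated without a separate proof precisely because it follows from the characterization $\mu>0$, $\mu+\kappa>0$ of strict rank-one convexity in Appendix \ref{appendix:lin} together with the displayed equivalence $\mu>0,\ \kappa>0 \iff \sigmalin$ invertible, which in turn rests on the same deviatoric/trace decoupling you spell out. Your explicit inversion $\sym\nabla u=\frac{1}{2\mu}\dev_2\sigmalin+\frac{1}{4\kappa}\tr(\sigmalin)\id$ and the remark that invertibility is meant on $\Sym(2)$ are both accurate and consistent with the paper's conventions.
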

Now, consider the planar nonlinear isotropic case with an additive volumetric-isochoric split.
\begin{lemma}\label{lemma:cauchyInvertibility}
	Assume that $W(F)=\Wiso\big(\frac{F}{\sqrt{\det F}}\big)+\Wvol(\det F)$ is strictly rank-one convex and $f$ is uniformly convex with $\Wvol(F)=f(\det F)$. Then the Cauchy stress-stretch law is locally invertible.
\end{lemma}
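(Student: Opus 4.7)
By isotropy, the left Cauchy–Green stretch $V=\sqrt{F\.F^T}$ and the Cauchy stress $\sigma$ are simultaneously diagonalizable, so the stress-stretch law reduces to the map
\[
	\Sigma\col(0,\infty)^2\to\R^2,\qquad (\lambda_1,\lambda_2)\longmapsto(\sigma_1,\sigma_2)
\]
between principal stretches and principal Cauchy stresses. By the inverse function theorem, it suffices to show that the Jacobian determinant $\det D\Sigma$ never vanishes. The plan is to compute $\Sigma$ explicitly from the representation $g(\lambda_1,\lambda_2)=h\bigl(\frac{\lambda_1}{\lambda_2}\bigr)+f(\lambda_1\lambda_2)$, observe that the volumetric-isochoric split makes $D\Sigma$ essentially triangular after a suitable change of variables, and then read off strict positivity from the assumed uniform convexity of $f$ together with Lemma \ref{lemma:necessary}\,d).

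\textbf{Step 1 (principal Cauchy stresses).} Using $\sigma_i=\frac{\lambda_i}{\det F}\,g_i(\lambda_1,\lambda_2)$ for planar isotropic hyperelasticity together with the derivatives $g_x,g_y$ already computed in the proof of Lemma \ref{lemma:voliso}, a direct calculation gives
\[
	\sigma_1=\frac{1}{\lambda_2^2}\.h'\bigl(\tfrac{\lambda_1}{\lambda_2}\bigr)+f'(\lambda_1\lambda_2),\qquad \sigma_2=-\frac{1}{\lambda_2^2}\.h'\bigl(\tfrac{\lambda_1}{\lambda_2}\bigr)+f'(\lambda_1\lambda_2).
\]
Thus $\sigma_1+\sigma_2=2\.f'(\lambda_1\lambda_2)$ depends only on $\det F$, while $\sigma_1-\sigma_2=\frac{2}{\lambda_2^2}\.h'\bigl(\tfrac{\lambda_1}{\lambda_2}\bigr)$ carries all the isochoric information.

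\textbf{Step 2 (triangular change of variables).} Introduce the diffeomorphisms $(\lambda_1,\lambda_2)\mapsto(s,t)\colonequals(\lambda_1\lambda_2,\lambda_1/\lambda_2)$ and $(\sigma_1,\sigma_2)\mapsto(u,v)\colonequals(\sigma_1+\sigma_2,\sigma_1-\sigma_2)$. Both have non-zero Jacobians on the relevant domain, so local invertibility of $\Sigma$ is equivalent to local invertibility of $(s,t)\mapsto(u,v)$. In these new coordinates $\lambda_2^2=s/t$, hence
\[
	u(s,t)=2\.f'(s),\qquad v(s,t)=\frac{2t}{s}\.h'(t),
\]
so the Jacobian matrix is lower triangular:
\[
	\frac{\partial u}{\partial t}=0,\qquad \frac{\partial u}{\partial s}=2\.f''(s),\qquad \frac{\partial v}{\partial t}=\frac{2}{s}\.\bigl[h'(t)+t\.h''(t)\bigr].
\]
Its determinant equals $\dfrac{4\.f''(s)}{s}\bigl[h'(t)+t\.h''(t)\bigr]$.

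\textbf{Step 3 (sign of the two factors).} By the hypothesis that $f$ is uniformly convex, $f''(s)>0$ for every $s>0$. On the other hand, strict rank-one convexity of $W$ together with Lemma \ref{lemma:necessary}\,d) yields the strict inequality $h'(t)+t\.h''(t)>0$ for every $t>0$. Consequently $\det D\Sigma>0$ everywhere on $\GLp(2)$, and the inverse function theorem provides local invertibility of the Cauchy stress-stretch law.

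\textbf{Expected difficulty.} The only non-routine ingredient is recognising that the split $W=\Wiso+\Wvol$ is precisely what decouples the stress map into the triangular form $u=u(s)$, $v=v(s,t)$; once this is seen, the remaining positivity is exactly the content of Lemma \ref{lemma:necessary}\,d) combined with uniform convexity of $f$, so no further estimates are needed.
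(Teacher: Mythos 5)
Your proposal is correct and follows essentially the same route as the paper's proof: the same principal Cauchy stresses $\sigma_{1,2}=\pm\frac{1}{\lambda_2^2}h'\bigl(\frac{\lambda_1}{\lambda_2}\bigr)+f'(\lambda_1\lambda_2)$, the same factorization of the Jacobian determinant into $f''\cdot\bigl[h'(t)+t\,h''(t)\bigr]$ up to positive factors, and the same appeal to uniform convexity of $f$ together with Lemma \ref{lemma:necessary}\,d). Your passage to the coordinates $(s,t)=(\lambda_1\lambda_2,\lambda_1/\lambda_2)$ and $(u,v)=(\sigma_1+\sigma_2,\sigma_1-\sigma_2)$, which renders the Jacobian triangular, is a tidy repackaging of the determinant computation the paper carries out directly in $(\lambda_1,\lambda_2)$, but not a different argument.
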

\begin{proof}
	Again, we use the notation $W(F)=g(\lambda_1,\lambda_2)=h\big(\frac{\lambda_1}{\lambda_2}\big)+f(\lambda_1\lambda_2)$
	and compute principle Cauchy stresses
	\begin{align}
		\sigmahat_1&=\frac{1}{\lambda_2}\.\frac{\partial W}{\partial\lambda_1}=\frac{1}{\lambda_2}\left[\frac{1}{\lambda_2}\.\ha+\lambda_2\.\fa\right]=\underbrace{\frac{1}{\lambda_2^2}\.\ha}_{\sigma_{\rm iso}}+\underbrace{\fa}_{\sigma_{\rm vol}}\,,\\
		\sigmahat_2&=\frac{1}{\lambda_1}\.\frac{\partial W}{\partial\lambda_2}=\frac{1}{\lambda_1}\left[-\frac{\lambda_1}{\lambda_2^2}\.\ha+\lambda_1\.\fa\right]=\underbrace{-\frac{1}{\lambda_2^2}\.\ha}_{-\sigma_{\rm iso}}+\underbrace{\fa}_{\sigma_{\rm vol}}\,.
	\end{align}
	Thus $\sigmahat$ has the form
	\begin{align}
		\sigmahat(\lambda_1,\lambda_2)=\matr{\phantom{-}\sigma_{\rm iso}(\lambda_1,\lambda_2)+\sigma_{\rm vol}(\lambda_1,\lambda_2)\\-\sigma_{\rm iso}(\lambda_1,\lambda_2)+\sigma_{\rm vol}(\lambda_1,\lambda_2)}\equalscolon\matr{\phantom{-}a+b\\-a+b}
	\end{align}
	with $a,b\in\R$. We compute
	\begin{align}
		\det(D\sigmahat)&=\left|\begin{matrix}
			a_{x}+b_{x} & a_{y}+b_{y} \\ -a_{x}+b_{x} & -a_{y}+b_{y}
		\end{matrix}\right|=(a_{x}+b_{x})\.(-a_{y}+b_{y})-(-a_{x}+b_{x})\.(a_{y}+b_{y})\\
		&=-a_{x}a_{y}+a_{x}b_{y}-a_{y}b_{x}+b_{x}b_{y}-\left[-a_{x}a_{y}-a_{x}b_{y}+a_{y}b_{x}+b_{x}b_{y}\right]= 2\left[a_{x}b_{y}-a_{y}b_{x}\right],\notag
	\end{align}
	where
	\begin{align*}
		a_{x}&=\frac{\partial}{\partial\lambda_1}\sigma_{\rm iso}=\frac{d}{d\lambda_1}\left[\frac{1}{\lambda_2^2}\.\ha\right]=\frac{1}{\lambda_2^3}\.\hb,\\
		a_{y}&=\frac{\partial}{\partial\lambda_2}\sigma_{\rm iso}=\frac{d}{d\lambda_2}\left[\frac{1}{\lambda_2^2}\.\ha\right]=-\frac{2}{\lambda_2^3}\.\ha+\frac{1}{\lambda_2^2}\.\hb\.\frac{-\lambda_1}{\lambda_2^2}=-\frac{1}{\lambda_2^3}\left[2\.\ha+\frac{\lambda_1}{\lambda_2}\.\hb\right],\\
		b_{x}&=\frac{\partial}{\partial\lambda_1}\sigma_{\rm vol}=\frac{d}{d\lambda_1}\.\fa=\lambda_2\.\fb\,,\qquad b_{y}=\frac{\partial}{\partial\lambda_2}\sigma_{\rm vol}=\frac{d}{d\lambda_2}\.\fa=\lambda_1\.\fb\,.
	\end{align*}
	Thus
	\begin{align}
		\det(D\sigmahat)&=2\left[\frac{1}{\lambda_2^3}\.\hb\cdot\lambda_1\.\fb+\frac{1}{\lambda_2^3}\left(2\.\ha+\frac{\lambda_1}{\lambda_2}\.\hb\right)\cdot\lambda_2\.\fb \right]\notag\\
		&=\frac{4\.\fb}{\lambda_2^2}\left[\frac{\lambda_1}{\lambda_2}\.\hb+\ha \right].
	\end{align}
	The stress-strain response is locally invertible if
	\begin{align}
		\det(D\sigmahat)=\frac{4\.\fb}{\lambda_2^2}\left[\frac{\lambda_1}{\lambda_2}\.\hb+\ha \right]&>0\qquad\text{for all }\;\lambda_1,\lambda_2>0\notag\\
		\iff\qquad f''(z)\left[t\.h''(t)+h'(t)\right]&>0\qquad\text{for all }\;t,z\in(0,\infty) \label{eq:stressStrainInvertibilityFinalInequality}
	\end{align}
	with $t=\frac{\lambda_1}{\lambda_2}$ and $z=\lambda_1\lambda_2$, with inequality \eqref{eq:stressStrainInvertibilityFinalInequality} being satisfied due to the implications
	\begin{align*}
		f\quad \text{is uniformly convex}\qquad&\implies \qquad f''(z)>0\qquad\text{for all }z\in(0,\infty)\\
		\text{and}\qquad W\quad \text{is strictly rank-one convex}\qquad &\overset{\mathclap{\text{Lemma }\ref{lemma:necessary}d)}}{\implies} \qquad  t\.h''(t)+h'(t)>0\qquad\text{for all }t\in(0,\infty)\,.\qedhere
	\end{align*}
\end{proof}
\begin{corollary}
	Let $W\col\GLp(2)\to\R$ with $W(F)=h\bigl(\frac{\lambda_1}{\lambda_2}\bigr)+f(\det(F))$ for all $F\in\GLp(2)$ with singular values $\lambda_1,\lambda_2$. If $f''(z)>0$ for all $z\in(0,\infty)$ and $t\.h''(t)+h'(t)>0$ for all $t\in(0,\infty)$, then the Cauchy stress-stretch law is locally invertible.
\end{corollary}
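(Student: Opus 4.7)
The plan is to observe that the proof of Lemma \ref{lemma:cauchyInvertibility} already does all the real work: its central computation establishes, by purely algebraic manipulation of the derivatives of $h$ and $f$, the identity
\begin{align*}
	\det(D\sigmahat)(\lambda_1,\lambda_2) = \frac{4\,\fb}{\lambda_2^2}\left[\frac{\lambda_1}{\lambda_2}\,\hb + \ha\right]
\end{align*}
for the Jacobian of the principal Cauchy-stress map $\sigmahat = (\sigmahat_1,\sigmahat_2)$ in the principal stretches. Nowhere in the derivation of this formula are the global hypotheses of the lemma (strict rank-one convexity of $W$ or uniform convexity of $f$) actually invoked: strict rank-one convexity enters the proof only at the very last step, and only through the pointwise consequence $t\.h''(t)+h'(t)>0$ extracted via Lemma \ref{lemma:necessary}d), while uniform convexity of $f$ enters only through $f''(z)>0$.

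My proof plan is therefore to repeat (or simply cite) the derivation of the Jacobian formula above from the proof of Lemma \ref{lemma:cauchyInvertibility} verbatim, with no changes, and then substitute the abbreviations $t=\lambda_1/\lambda_2$ and $z=\lambda_1\lambda_2$ so that the two hypotheses of the corollary force
\begin{align*}
	\det(D\sigmahat)(\lambda_1,\lambda_2) = \frac{4\,f''(z)}{\lambda_2^2}\,\bigl[\,t\.h''(t)+h'(t)\,\bigr] > 0 \qquad\text{for all }\;\lambda_1,\lambda_2\in(0,\infty).
\end{align*}
Since $\sigmahat\col(0,\infty)^2\to\R^2$ is smooth and has everywhere-nonvanishing Jacobian determinant, the inverse function theorem yields local invertibility of the Cauchy stress-stretch response, as claimed.

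There is essentially no obstacle to overcome: the corollary is a repackaging of Lemma \ref{lemma:cauchyInvertibility} that isolates the two pointwise sign conditions actually used in the proof and discards the stronger global convexity assumptions. The only point that truly needs to be checked—and it is immediate from inspection of the algebra leading to the determinant formula—is that the derivation of the Jacobian uses nothing about $W$ beyond $C^2$-smoothness and the volumetric-isochoric form $W(F)=h(\lambda_1/\lambda_2)+f(\lambda_1\lambda_2)$.
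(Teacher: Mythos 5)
Your proposal is correct and matches the paper's intent exactly: the corollary is stated without separate proof precisely because the Jacobian formula $\det(D\sigmahat)=\frac{4\,f''(z)}{\lambda_2^2}\bigl[t\.h''(t)+h'(t)\bigr]$ derived in the proof of Lemma \ref{lemma:cauchyInvertibility} uses only the split form of $W$, and the two hypotheses of the corollary are exactly the pointwise sign conditions that make this determinant positive. Your observation that strict rank-one convexity and uniform convexity enter the lemma's proof only through these two consequences is the same reading the paper relies on.
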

\begin{corollary}
	Let $W\col\GLp(2)\to\R$ with $W(F)=h\bigl(\frac{\lambda_1}{\lambda_2}\bigr)+f(\det F)$ for all $F\in\GLp(2)$ with singular values $\lambda_1,\lambda_2$ for uniformly convex functions $h,f\col(0,\infty)\to\R$. Then the Cauchy stress-stretch law is locally invertible.\footnote{The statement follows from the rank-one convexity of $W$ and Lemma \ref{lemma:cauchyInvertibility}. Alternatively, the inequality $t\.h''(t)+h'(t)>0$ can be shown directly by observing that $h'(1)=0$ and $h''(t)>0$ for all $t\geq 1$ implies
	\begin{align*}
		h'(t)\geq 0 \quad\text{for all }t\geq 1\qquad\implies\qquad t\.h''(t)+h'(t)>0\quad\text{for all }t\geq 1\,.
	\end{align*}
	For the case $t<1$, we use the symmetry $h(t)=h\big(\frac{1}{t}\big)$ and compute
	\begin{align}
		h'(t)&=\ddt\. h(t)=\ddt\.h\left(\frac{1}{t}\right)=-\frac{1}{t^2}\.h'\left(\frac{1}{t}\right),\notag\\
		h''(t)&=\frac{d^2}{dt^2}\.h(t)=\frac{d^2}{dt^2}\.h\left(\frac{1}{t}\right)=\ddt\left[-\frac{1}{t^2}\.h'\left(\frac{1}{t}\right)\right]=\frac{2}{t^3}\.h'\left(\frac{1}{t}\right)+\frac{1}{t^4}\.h''\left(\frac{1}{t}\right),\\
		t\.h''(t)+h'(t)&=\frac{2}{t^2}\.h'\left(\frac{1}{t}\right)+\frac{1}{t^3}\.h''\left(\frac{1}{t}\right)-\frac{1}{t^2}\.h'\left(\frac{1}{t}\right)=\underbrace{\frac{1}{t^2}\.h'\left(\frac{1}{t}\right)}_{\geq 0}+\underbrace{\frac{1}{t^3}\.h''\left(\frac{1}{t}\right)}_{>0}>0\quad\text{for all }t<1\,.\notag
	\end{align}}
\end{corollary}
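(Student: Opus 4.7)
The plan is to apply the immediately preceding corollary, which reduces local invertibility of the Cauchy stress-stretch law to two inequalities: $f''(z) > 0$ for all $z \in (0,\infty)$ and $t\.h''(t) + h'(t) > 0$ for all $t \in (0,\infty)$. The first inequality is immediate from the uniform convexity of $f$, so the work reduces to establishing the second inequality for a uniformly convex $h$ that additionally satisfies the isotropy-induced symmetry $h(t) = h\bigl(\tfrac{1}{t}\bigr)$ on $(0,\infty)$.

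The first step is to record that, as noted in the footnote to Lemma \ref{lemma:voliso}, the symmetry $h(t) = h(1/t)$ together with $h \in C^1$ forces $h'(1) = 0$. This anchors the behavior of $h'$ at $t = 1$ and lets me separate the problem into the cases $t \geq 1$ and $t < 1$.

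For $t \geq 1$, uniform convexity gives $h''(t) > 0$, and since $h'(1) = 0$ with $h'$ strictly increasing, I obtain $h'(t) \geq 0$ on $[1,\infty)$. Both terms in $t\.h''(t) + h'(t)$ are then non-negative, and the first is strictly positive, so the inequality follows at once.

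For $t < 1$, I would differentiate the identity $h(t) = h(1/t)$ twice, exactly as in the derivation of \eqref{eq:symmetryofh}, to express $h'(t)$ and $h''(t)$ in terms of $h'(1/t)$ and $h''(1/t)$. A direct substitution collapses the cross terms and leaves
\[
	t\.h''(t) + h'(t) = \frac{1}{t^2}\.h'\!\left(\frac{1}{t}\right) + \frac{1}{t^3}\.h''\!\left(\frac{1}{t}\right).
\]
Since $1/t > 1$, the previous case applies at $1/t$: $h'(1/t) \geq 0$ and $h''(1/t) > 0$, so the right-hand side is strictly positive. With both hypotheses of the preceding corollary verified, local invertibility follows. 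The only mild obstacle is the bookkeeping in the $t < 1$ case, but the symmetry cleanly reduces it to the already-handled regime $t \geq 1$; note that strictly speaking uniform convexity is stronger than needed—$h'' > 0$ pointwise suffices for this argument.
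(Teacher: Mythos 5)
Your proposal is correct and follows essentially the same route as the paper: the footnote to this corollary gives exactly your direct verification of $t\,h''(t)+h'(t)>0$, using $h'(1)=0$, positivity of $h''$ on $[1,\infty)$, and the symmetry $h(t)=h\bigl(\frac{1}{t}\bigr)$ to reduce the case $t<1$ to $t\geq 1$. Your closing observation that pointwise $h''>0$ suffices in place of uniform convexity is also accurate.
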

\section{Conclusion}
Our investigation clearly demonstrates that for planar isotropic hyperelasticity, assuming a volumetric-isochoric split of the elastic energy endows the theory with a lot of additional mathematical structure which can be consequently exploited. Indeed, we have shown how the classical Knowles-Sternberg planar ellipticity criterion - which is represented by a  family of two-dimensional differential inequalities - can be reduced to a family of only one-dimensional coupled differential inequalities for split energies, which allows for a much more accessible rank-one convexity criterion.

By using these reduced inequalities, we could show that it is possible for a volumetric-isochorically split energy to be altogether rank-one convex even if the isochoric part is non-elliptic, in stark contrast to the linear case, where only the volumetric part of a rank-one convex energy might be non-elliptic.
We also applied our method to the general class of Hadamard-type materials and obtained a universal classification of their rank-one convexity.


In future contributions, based on encouraging preliminary results, we aim to extend our investigation from rank-one convexity to polyconvexity.
Unfortunately, our methods are at present strictly restricted to the planar isotropic case; it remains to be seen whether similar statements can be established for the three-dimensional case as well.
%
%
%
%
%
\subsubsection*{Acknowledgements}
The work of I.D.\ Ghiba was supported by a grant of the Romanian Ministry of Research and Innovation, CNCS--UEFISCDI, project number PN-III-P1-1.1-TE-2019-0397, within PNCDI III.
%
%
%
%
\newpage
\footnotesize
\section{References}
\printbibliography[heading=none]
\begin{appendix}
%
%
%
%
\section{Rank-one convexity in planar linear elasticity}\label{appendix:lin}
For the convenience of the reader, we recall here the well-known computation for rank-one convexity conditions in planar linear isotropic elasticity. Since \begin{align}
	\Wlin(\nabla u)&=\mu\.\norm{\dev_2\sym\nabla u}^2+\frac{\kappa}{2}\.(\tr\nabla u)^2
\end{align}
is a quadratic expression in terms of the displacement gradient $\grad u$ and hence $\frac{1}{2}\.D^2\Wlin(\nabla u).(H,H)=\Wlin(H)$, the energy $\Wlin$ is rank-one convex if and only if $\Wlin(\xi\otimes\eta)\geq 0$ for all $\xi,\eta\in\R^2$. We compute
\begin{align}
	\Wlin(\xi\otimes\eta)&=\mu\.\norm{\dev_2\sym\xi\otimes\eta}^2+\frac{\kappa}{2}\.(\tr\xi\otimes\eta)^2\notag\\
	&\overset{\mathclap{\eqref{eq:dev2}}}{=}\;\mu\left(\norm{\sym\xi\otimes\eta}^2-\frac{1}{2}\.(\tr\sym\xi\otimes\eta)^2\right)+\frac{\kappa}{2}\.(\tr\xi\otimes\eta)^2\notag\\
	&=\mu\left(\norm{\frac{1}{2}(\xi\otimes\eta+\eta\otimes\xi)}^2-\frac{1}{2}\.(\tr\xi\otimes\eta)^2\right)+\frac{\kappa}{2}\.(\tr\xi\otimes\eta)^2\notag\\
	&=\frac{\mu}{4}\left(\norm{\xi\otimes\eta}^2+2\.\iprod{\xi\otimes\eta,\eta\otimes\xi}+\norm{\eta\otimes\xi}^2\right)+\frac{\kappa-\mu}{2}\.(\tr\xi\otimes\eta)^2\\
	&=\frac{\mu}{2}\.\abs{\xi}^2\abs{\eta}^2+\frac{\mu}{2}\.\iprod{\xi,\eta}^2+\frac{\kappa-\mu}{2}\.\iprod{\xi,\eta}^2=\frac{\mu}{2}\.\abs{\xi}^2\abs{\eta}^2+\frac{\kappa}{2}\.\iprod{\xi,\eta}^2\,,\notag
\end{align}
where we used the equality
\begin{align*}
	\iprod{(\xi\otimes\eta),(\eta\otimes\xi)}=\iprod{(\eta\otimes\xi)\.(\xi\otimes\eta)^T,\id}=\iprod{(\xi\otimes\eta)\.(\xi\otimes\eta),\id}=\iprod{\iprod{\eta,\xi}\.\xi\otimes\eta,\id}=\iprod{\eta,\xi}\tr\xi\otimes\eta=\iprod{\eta,\xi}\.\iprod{\xi,\eta}=\iprod{\xi,\eta}^2.
\end{align*}
Therefore, due to the Cauchy-Schwarz inequality, the energy potential in planar isotropic linear elasticity is rank-one convex if and only if $\mu\geq 0$ and $\mu+\kappa\geq 0$, where $\mu$ and $\kappa$ represent the (planar) shear modulus and the bulk modulus, respectively. In terms of the first Lam\'e parameter $\lambda=\kappa-\mu$, rank-one convexity is also equivalent to the conditions $\mu\geq0$ and $2\mu+\lambda\geq 0$.

Since the rank-one convexity of the isochoric and the volumetric part of the energy potential in linear elasticity are determined\footnote{%
	Note that the infinitesimal isochoric term $\norm{\dev_2\sym\xi\otimes\eta}^2=\frac{1}{2}\.\abs{\xi}^2\abs{\eta}^2$ is itself strictly rank-one convex.%
}
by the signs of $\mu$ and $\kappa$, respectively, we find
\begin{align}
	\Wlin\quad\text{strictly rank-one convex}\qquad\iff\qquad\mu>0\,,\;\mu+\kappa>0\qquad\implies\qquad\Wliniso\quad\text{is strictly rank-one convex}
\end{align}
whereas the volumetric part $\Wlinvol$ is not necessarily elliptic by itself even if $\Wlin$ is rank-one convex.
%
%
%
%
\section{The planar Hadamard material}
We linearize the planar Hadamard material
\begin{align}
	W(F)=\frac{\mu}{2}\.\norm{F}^2+f(\det F)\,,\qquad f'(1)=-\mu\,,
\end{align}
which does not exhibit an additive volumetric-isochoric split, but is stress-free in the reference configuration under the assumption that $f'(1)=-\mu$. First, we compute
\begin{align}
	W(\id+H)&=\frac{\mu}{2}\.\norm{\id+H}^2+f(\det(\id+H))=\frac{\mu}{2}\left(\norm{\id}^2+2\.\tr H+\norm{H}^2\right)+f(1+\tr H+\det H)\notag\\
	&=\frac{\mu}{2}\left(2+2\.\tr H+\norm{H}^2\right)+f(1)+f'(1)\left(\tr H+\det H\right)+\frac{f''(1)}{2}\left(\tr H+\det H\right)^2+O(H^3)\\
	&=\underbrace{\mu+f(1)\vphantom{\frac{f''(1)}{2}}}_{\text{constant}\vphantom{f'\mu}}+\underbrace{\mu\.\tr H+f'(1)\.\tr H\vphantom{\frac{f''(1)}{2}}}_{f'(1)=-\mu\,,\;\text{stress-free}}+\underbrace{\frac{\mu}{2}\.\norm{H}^2+f'(1)\.\det H+\frac{f''(1)}{2}\.(\tr H)^2}_{\text{quadratic terms}\vphantom{f'\mu}}+O(H^3)\,.\notag
\end{align}
In order to continue with the linearization, we note that $(\tr H)^2-\tr(H^2)=2\.\det(H)$ for any $H\in\R^{2\times2}$ and
\begin{align*}
	\norm{\sym H}^2-\frac{1}{2}\.\norm{H}^2&=\iprod{\frac{1}{2}(H+H^T),\frac{1}{2}(H+H^T)}-\frac{1}{2}\.\norm{H}^2\\
	&=\frac{1}{4}\.\norm{H}^2+\frac{1}{2}\iprod{H,H^T}+\frac{1}{4}\.\norm{H^T}^2-\frac{1}{2}\.\norm{H}^2
	=\frac{1}{2}\iprod{H^2,\id}=\frac{1}{2}\.(\tr H^2)\,.
\end{align*}
Therefore
\begin{equation}
	\det H
	= \frac12\.(\tr H)^2 - \frac12\.\tr(H^2)
	= \frac12\.(\tr H)^2+\frac12\.\norm{H}^2-\norm{\sym H}^2\,,\label{eq:detPlanar}
\end{equation}
so we compute
\begin{align}
	W(\id+H)&=W(\id)+\frac{\mu}{2}\.\norm{H}^2-\mu\.\det H+\frac{f''(1)}{2}\.(\tr H)^2+O(H^3)\notag\\
	&=W(\id)+\frac{\mu}{2}\.\norm{H}^2-\frac{\mu}{2}\.(\tr H)^2-\frac{\mu}{2}\.\norm{H}^2+\mu\.\norm{\sym H}^2+\frac{f''(1)}{2}\.(\tr H)^2+O(H^3)\\
	&=W(\id)+\mu\.\norm{\sym H}^2+\left(\frac{f''(1)}{2}-\frac{\mu}{2}\right)(\tr H)^2+O(H^3)\,.\notag
\end{align}
Note that $\mu$ is indeed the infinitesimal shear modulus and that the infinitesimal Lam\'e-parameter is given by $\lambda=f''(1)-\mu$. Together with the well-known decomposition
\begin{align}
	\norm{\dev_2 X}^2=\norm{X-\frac{1}{2}\tr(X)\.\id}^2&=\norm{X}^2-\iprod{X,\tr(X)\.\id}+\frac{1}{4}\.\tr(X)^2\norm{\id}^2=\norm{X}^2-\left(\tr X\right)^2+\frac{1}{2}\left(\tr X\right)^2=\norm{X}^2-\frac{1}{2}\left(\tr X\right)^2\notag\\
	\iff\qquad\norm{X}^2&=\norm{\dev_2 X}^2+\frac{1}{2}\left(\tr X\right)^2\label{eq:dev2}
\end{align}
of the Frobenius norm of $X\in\R^{2\times2}$ into its deviatoric part and its trace part, we obtain
\begin{align}
	W(\id+H)&=W(\id)+\mu\.\norm{\sym H}^2+\left(\frac{f''(1)}{2}-\frac{\mu}{2}\right)(\tr H)^2=\mu\.\norm{\dev_2\sym H}^2+\frac{\mu}{2}\.(\tr H)^2+\left(\frac{f''(1)}{2}-\frac{\mu}{2}\right)(\tr H)^2+O(H^3)\notag\\
	&=W(\id)+\mu\.\norm{\dev_2\sym H}^2+\frac{f''(1)}{2}\.(\tr H)^2+O(H^3)\,,
\end{align}
thus the infinitesimal bulk modulus is $\kappa=f''(1)$. Overall,
\begin{align}
	\What_{\rm lin}(\eps)=\mu\.\norm{\eps}^2+\frac{f''(1)-\mu}{2}\.(\tr\eps)^2=\mu\.\norm{\dev_2\eps}^2+\frac{f''(1)}{2}\.(\tr\eps)^2\,,
\end{align}
where $\eps=\sym\nabla u$ denotes the infinitesimal strain.
%
%
%
%
%
\section{The planar Hadamard material with volumetric-isochoric split}
Next, we linearize the planar isotropic energy
\begin{align}
	\What(F)=\mu\,\K+f(\det F)=\frac{\mu}{2}\.\left\|\frac{F}{\sqrt{\det F}}\right\|^2+f(\det F)\,,\qquad \K=\frac{1}{2}\.\frac{\norm{F}^2}{\det F}\geq 1\,,
\end{align}
now with an additive volumetric-isochoric split, which is stress-free in the reference configuration if $f'(1)=0$. We start with the expansion
\begin{align}
	\K(\id+H)&=\frac{1}{2}\.\frac{\norm{\id+H}^2}{\det(\id+H)}=\frac{1}{2}\frac{\norm{\id}^2+2\.\tr H+\norm{H}^2}{\det\id+\tr H+\det H}=\frac{1}{2}\left(2+2\.\tr H+\norm{H}^2\right)\frac{1}{1+\tr H+\det H}\notag\\
	&=\left(1+\tr H+\frac{1}{2}\.\norm{H}^2\right)\left[1-\left(\tr H+\det H\right)+\left(\tr H+\det H\right)^2+O(H^3)\right]\notag\\
	&=\left(1+\tr H+\frac{1}{2}\.\norm{H}^2\right)\left(1-\tr H-\det H+(\tr H)^2+O(H^3)\right)\\
	&=1-\tr H-\det H+(\tr H)^2+(\tr H)\left(1-\tr H\right)+\frac{1}{2}\.\norm{H}^2+O(H^3)\notag\\
	&=1+(\tr H)^2+\frac{1}{2}\.\norm{H}^2-\det H+O(H^3)\notag\\
	&\overset{\mathclap{\eqref{eq:detPlanar}}}{=}\;1+(\tr H)^2+\frac{1}{2}\.\norm{H}^2-\left(\frac{1}{2}\.(\tr H)^2+\frac{1}{2}\.\norm{H}^2-\norm{\sym H}^2\right)+O(H^3)=1+\norm{\sym H}^2-\frac{1}{2}\.(\tr H)^2+O(H^3)\,,\notag
\end{align}
note that
\begin{align*}
	(1+h)\.(1-h+h^2+O(h^3))=1+h-h-h^2+h^2+h^3=1+O(h^3)\,.
\end{align*}
We also compute
\begin{align*}
	W(\id+H)&=\mu\,\K(\id+H)+f(1+\tr H+\det H)\\
	&=\mu+\mu\.\norm{\sym H}^2-\frac{\mu}{2}\.(\tr H)^2+f(1)+\underbrace{f'(1)\left(\tr H+\det H\right)}_{\mathclap{=0\,,\;\text{stress-free}}}+\frac{f''(1)}{2}\.(\tr H)^2+O(H^3)\\
	&=\mu+f(1)+\mu\.\norm{\sym H}^2+\frac{f''(1)-\mu}{2}\.(\tr H)^2+O(H^3)\overset{\eqref{eq:dev2}}{=}\mu+f(1)+\mu\.\norm{\dev_2\sym H}^2+\frac{f''(1)}{2}\.(\tr H)^2+O(H^3)
\end{align*}
and observe that $\mu$ is the infinitesimal shear modulus, $\lambda=f''(1)-\mu$ is the infinitesimal Lam\'e-parameter and $\kappa=f''(1)$ the infinitesimal bulk modulus. Overall,
\begin{align}
	\What_{\rm lin}(\eps)=\mu\.\norm{\eps}^2+\frac{f''(1)-\mu}{2}\.(\tr\eps)^2=\mu\.\norm{\dev_2\eps}^2+\frac{f''(1)}{2}\.(\tr\eps)^2
\end{align}
in terms of the infinitesimal strain $\eps=\sym\nabla u$.

Now, we consider the more general case
\begin{align}
	W(F)=\psi(\K(F))+f(\det F)
\end{align}
for some arbitrary smooth $\psi\col[1,\infty)\to\R$. Then the linearization of $W$ is given by
\begin{align}
	W(\id+H)&=\psi(\K(\id+H))+f(\det(\id+H))=\psi(1+\norm{\dev_2\sym H}^2+O(H^3))+f(1)+\frac{f''(1)}{2}\.(\tr H)^2+O(H^3)\notag\\
	&=\psi(1)+\psi'(1)\.\norm{\dev_2\sym H}^2+\psi''(1)\.\norm{\dev_2\sym H}^4+f(1)+\frac{f''(1)}{2}\.(\tr H)^2+O(H^3)\notag\\
	&=W(\id)+\psi'(1)\.\norm{\dev_2\sym H}^2+\frac{f''(1)}{2}\.(\tr H)^2+O(H^3)\,.\label{eq:linformula}
\end{align}
Therefore, the infinitesimal shear modulus and bulk modulus are given by $\mu=\psi'(1)$ and $\kappa=f''(1)$, respectively.
%
%
%
%
%
\section{Algebraic characterization of rank-one convexity}\label{appendix:dumitrel}
In order to illustrate the extent to which our results simplify the test for rank-one convexity of energy functions with an additive volumetric-isochoric split compared to a more direct approach, we provide here some of our initial computations which did not utilize the specific singular value representation in \eqref{eq:definitionOfh}.

Again, consider an energy function of the form
\begin{align}
	W\col\GLp(2)\to\R\,,\quad
	W(F)=\Wiso\bigg(\frac{F}{(\det F)^{1/2}}\bigg)+W_{\rm vol}(\det F)=\Wiso\bigg(\frac{F}{\sqrt{\det F}}\bigg)+f(\det F)\,.
\end{align}
We compute the bilinear form of the second derivative:
\begin{align}
	D_F^2W(F).(H,H)=D_F^2\Wiso\bigg(\frac{F}{\sqrt{\det F}}\bigg).(H,H)+D_F^2f(\det F).(H,H)\,.
\end{align}
Beginning with the second derivative of the volumetric part, we first find
\begin{align}
	D_Ff(\det F).H=f'(\det F)\.D_F(\det F).H\,.
\end{align}
Now, since
\begin{align}
	\det(F+H)=\det F\cdot \det (\id+F^{-1}H)=\det F[1+\tr(F^{-1}H)+\det(F^{-1}H)]=\det F+\det F\cdot \tr(F^{-1}H)+\det H\,,
\end{align}
we obtain
\begin{align}
		D_F(\det F).H=\det F\.\tr(F^{-1}H)=\det F\.\langle F^{-1}H,\id\rangle
		\qquad\text{and}\qquad
		D_F^2(\det F).(H,H)=2\.\det H\,.
\end{align}
The first derivative can equivalently be expressed as
\begin{align}
	D_F(\det F).H= -\langle F^{T},H\rangle+(\tr F)\.(\tr H)\,,
\end{align}
since the Cayley-Hamilton Theorem yields
\begin{align}
	F-\tr F\cdot \id+\det F\cdot F^{-1}=0 \qquad \implies\qquad \det F\cdot F^{-1}=-F+(\tr F)\.\id\,.
\end{align}
%
For the second derivative of the volumetric part, we therefore find
\begin{align}
	D_F^2f(\det F).(H,H)&=f''(\det F)\.[D_F(\det F).H]^2+f' \.D_F^2(\det F).(H,H)\notag\\
		&=f''(\det F)\.[\det F\.\langle F^{-1}H,\id\rangle]^2+2\.f'(\det F) \.\det H\,.
\end{align}
For a rank-one matrix $H=\xi\otimes \eta$, using that $\det(\xi\otimes \eta)=0$, we further obtain
\begin{align}
	D_F^2f(\det F)).(\xi\otimes \eta,\xi\otimes \eta)&=f''(\det F)\.[\det F\.\langle F^{-1}\xi\otimes \eta,\id\rangle]^2=f''(\det F)\.(\det F)^2\.[\.\langle F^{-1}\xi ,\eta\rangle]^2.
\end{align}

\bigskip\noindent
Now, we turn our attention to the derivatives of the isochoric part, finding
\begin{align}
	D_F\Wiso\bigg(\frac{F}{\sqrt{\det F}}\bigg).H=\langle D\.\Wiso\bigg(\frac{F}{\sqrt{\det F}}\bigg), D_F\bigg(\frac{F}{\sqrt{\det F}}\bigg).H\rangle\,,
\end{align}
and
\begin{align}
	D_F^2\Wiso\bigg(\frac{F}{\sqrt{\det F}}\bigg).(H,H)&=D^2\.\Wiso\bigg(\frac{F}{\sqrt{\det F}}\bigg). \Big[D_F\bigg(\frac{F}{\sqrt{\det F}}\bigg).H,D_F\bigg(\frac{F}{\sqrt{\det F}}\bigg).H\Big]\notag\\
	&\phantom{=}\;\;+\langle D\.\Wiso\bigg(\frac{F}{\sqrt{\det F}}\bigg), D_F^2\bigg(\frac{F}{\sqrt{\det F}}\bigg).(H,H)\rangle\,.
\end{align}
We compute
\begin{align}
	D_F\bigg(\frac{F}{\sqrt{\det F}}\bigg).H=D_F(F\.(\det F)^{-1/2}).H&=H\.(\det F)^{-1/2}-F\.\frac{1}{2}\.(\det F)^{-3/2}\.\det F\.\langle F^{-T},H\rangle\notag\\
	&=\frac{H}{(\det F)^{1/2}}-\frac{1}{2}\.\frac{F}{(\det F)^{1/2}}\.\langle F^{-T},H\rangle\,.
\end{align}
as well as
\begin{align}
	D_F^2&\bigg(\frac{F}{\sqrt{\det F}}\bigg).(H,H)\notag\\
	&=- \frac{1}{2}\.H\.(\det F)^{-3/2}\.\det F\.\langle F^{-T},H\rangle-\frac{1}{2}\.\frac{H}{(\det F)^{1/2}}\.\langle F^{-T},H\rangle+\frac{3}{4}\.{F}(\det F)^{-5/2}\.[\det F\.\langle F^{-T},H\rangle]^2\notag\\
	&\phantom{=}\;\;-\frac{1}{2}\frac{F}{(\det F)^{3/2}} \langle -H^T+\tr H\cdot \id, H\rangle\\
	&=-\frac{H}{(\det F)^{1/2}},\langle F^{-T},H\rangle+\frac{3}{4}\.{F}(\det F)^{-5/2}\.[\det F\.\langle F^{-T},H\rangle]^2-\frac{1}{2}\frac{F}{(\det F)^{3/2}} \langle -H^T+\tr H\cdot \id, H\rangle\,,\notag
\end{align}
since
\begin{align}
	F^2-\tr F\cdot F+\det F\cdot \id=0
	\;\;&\implies\;\; F-\tr F\cdot \id+\det F\cdot F^{-1}=0\notag\\
	\;\;&\implies\;\; F^T-\tr F\cdot \id+\det F\cdot F^{-T}=0
	\;\;\implies\;\; \det F\cdot F^{-T}= -F^T+\tr F\cdot \id
\end{align}
by virtue of the Cayley-Hamilton Theorem and
\begin{align}
	D_F (\det F\cdot F^{-T}).H=D_F(-F^T+\tr F\cdot \id)=-H^T+\tr H\cdot \id\,.
\end{align}
Moreover,
\begin{align}
	\langle -H^T+\tr H\cdot \id, H\rangle=-\tr(H^2)+(\tr H)^2=2\.\det H
\end{align}
and thus
\begin{align}
	D_F^2\bigg(\frac{F}{\sqrt{\det F}}\bigg).(H,H)=-\.\frac{H}{(\det F)^{1/2}}\.\langle F^{-T},H\rangle+\frac{3}{4}\.\frac{F}{(\det F)^{3/2}}\.[\langle F^{-T},H\rangle]^2-\frac{1}{2}\frac{F}{(\det F)^{3/2}} 2\.\det H\,.
\end{align}
Assuming again that $H=\xi\otimes \eta$ is a rank-one matrix with $\det(\xi\otimes\eta)=0$, we find
\begin{align}
	D_F\bigg(\frac{F}{\sqrt{\det F}}\bigg).(\xi\otimes \eta)=\frac{\xi\otimes \eta}{(\det F)^{1/2}}-\frac{1}{2}\.\frac{F}{(\det F)^{1/2}}\.\langle F^{-T},\xi\otimes \eta\rangle
\end{align}
and
\begin{align}
	D_F^2\bigg(\frac{F}{\sqrt{\det F}}\bigg).(\xi\otimes \eta,\xi\otimes \eta)=-{\xi\otimes \eta}\.\frac{1}{(\det F)^{1/2}}\.\langle F^{-T},\xi\otimes \eta\rangle+\frac{3}{4}\.\frac{F}{(\det F)^{1/2}}\.[\langle F^{-T},\xi\otimes \eta\rangle]^2.
\end{align}
Thus, we have deduced
\begin{align}
	D_F^2&\Wiso\bigg(\frac{F}{\sqrt{\det F}}\bigg).(\xi\otimes \eta,\xi\otimes \eta)\\
	&=D^2\.\Wiso\bigg(\frac{F}{\sqrt{\det F}}\bigg). \Big[\frac{\xi\otimes \eta}{(\det F)^{1/2}}-\frac{1}{2}\.\frac{F}{(\det F)^{1/2}}\.\langle F^{-T},\xi\otimes \eta\rangle,\frac{\xi\otimes \eta}{(\det F)^{1/2}}-\frac{1}{2}\.\frac{F}{(\det F)^{1/2}}\.\langle F^{-T},\xi\otimes \eta\rangle\Big]\notag\\
	&\phantom{=}\;+\langle D\.\Wiso\bigg(\frac{F}{\sqrt{\det F}}\bigg),-{\xi\otimes \eta}\.\frac{1}{(\det F)^{1/2}}\.\langle F^{-T},\xi\otimes \eta\rangle+\frac{3}{4}\.\frac{F}{(\det F)^{1/2}}\.[\langle F^{-T},\xi\otimes \eta\rangle]^2\rangle\,.\notag
\end{align}
In conclusion
\begin{align}
	&\!D_F^2W(F).(\xi\otimes \eta,\xi\otimes \eta)\\
	&=D^2\.\Wiso\bigg(\frac{F}{\sqrt{\det F}}\bigg). \Big[\frac{\xi\otimes \eta}{(\det F)^{1/2}}-\frac{1}{2}\.\frac{F}{(\det F)^{1/2}}\.\langle F^{-T},\xi\otimes \eta\rangle,\frac{\xi\otimes \eta}{(\det F)^{1/2}}-\frac{1}{2}\.\frac{F}{(\det F)^{1/2}}\.\langle F^{-T},\xi\otimes \eta\rangle\Big]\notag\\
	&\phantom{=}\;+\langle D\.\Wiso\bigg(\frac{F}{\sqrt{\det F}}\bigg),-{\xi\otimes \eta}\.\frac{1}{(\det F)^{1/2}}\.\langle F^{-T},\xi\otimes \eta\rangle+\frac{3}{4}\.\frac{F}{(\det F)^{1/2}}\.[\langle F^{-T},\xi\otimes \eta\rangle]^2\rangle+f''(\det F)\.(\det F)^2\.[\.\langle F^{-T},\xi\otimes \eta\rangle]^2\notag\\[0.5em]
	&=D^2\.\Wiso\bigg(\frac{F}{\sqrt{\det F}}\bigg). \Big[\frac{\xi\otimes \eta}{(\det F)^{1/2}}-\frac{1}{2}\.\frac{F}{(\det F)^{1/2}}\.\langle F^{-1}\xi, \eta\rangle,\frac{\xi\otimes \eta}{(\det F)^{1/2}}-\frac{1}{2}\.\frac{F}{(\det F)^{1/2}}\.\langle F^{-1}\xi, \eta\rangle\Big]\notag\\
	&\phantom{=}\;\;+\langle D\.\Wiso\bigg(\frac{F}{\sqrt{\det F}}\bigg), {\xi\otimes \eta}\.\frac{1}{(\det F)^{1/2}}\.\langle F^{-1}\xi\,,\; \eta\rangle+\frac{3}{4}\.\frac{F}{(\det F)^{1/2}}\.[\langle F^{-1}\xi, \eta\rangle]^2\rangle+f''(\det F)\.(\det F)^2\.[\.\langle F^{-1}\xi, \eta\rangle]^2.\notag
\end{align}

\bigskip\noindent
Finally, we consider the common representation of the isochoric part in terms of the distortion function $\K\col\GLp(2)\to[1,\infty)$, cf.\ \eqref{eq:muKenergy}. More specifically, let $\psi\col[1,\infty)\to\R$ denote the uniquely determined function with
\begin{align}
	\Wiso\bigg(\frac{F}{\sqrt{\det F}}\bigg)
	= \psi(\K(F))
	=\psi\bigg(\frac{1}{2}\.\frac{\|F\|^2}{\det F}\bigg)
\end{align}
for all $F\in\GLp(2)$. Then
\begin{align}
	D\.\Wiso(X).H=\psi'(\frac{1}{2}\.\|X\|^2)\.\langle X,H\rangle
	\,,\qquad
	D^2\.\Wiso(X).(H,H)=\psi''(\frac{1}{2}\.\|X\|^2)\.[\langle X,H\rangle]^2+\psi'(\frac{1}{2}\.\|X\|^2)\.\| H\|^2
\end{align}
and thus
\begin{align}
	&\!D_F^2W(F).(\xi\otimes \eta,\xi\otimes \eta)\\
	&=\psi''\bigg(\frac{1}{2}\.\frac{\|F\|^2}{\det F}\bigg)\.[\langle \frac{F}{(\det F)^{1/2}},\frac{\xi\otimes \eta}{(\det F)^{1/2}}-\frac{1}{2}\.\frac{F}{(\det F)^{1/2}}\.\langle F^{-1}\xi, \eta\rangle\rangle]^2+\psi'\bigg(\frac{1}{2}\.\frac{\|F\|^2}{\det F}\bigg)\.\| \frac{\xi\otimes \eta}{(\det F)^{1/2}}-\frac{1}{2}\.\frac{F}{(\det F)^{1/2}}\.\langle F^{-1}\xi, \eta\rangle\|^2\notag\\
	&\phantom{=}\;+\psi'\bigg(\frac{1}{2}\.\frac{\|F\|^2}{\det F}\bigg)\.\langle \frac{F}{(\det F)^{1/2}}, -{\xi\otimes \eta}\.\frac{1}{(\det F)^{1/2}}\.\langle F^{-1}\xi, \eta\rangle+\frac{3}{4}\.\frac{F}{(\det F)^{1/2}}\.[\langle F^{-1}\xi, \eta\rangle]^2\rangle+f''(\det F)\.(\det F)^2\.[\.\langle F^{-1}\xi, \eta\rangle]^2\notag\\[0.5em]
	&=\psi''\bigg(\frac{1}{2}\.\frac{\|F\|^2}{\det F}\bigg)\.\frac{1}{(\det F)^2} \.[\langle F,{\xi\otimes \eta}\rangle -\frac{1}{2}\.\|F\|^2\langle F^{-1}\xi, \eta\rangle]^2\\
	&\phantom{=}\;+\psi'\bigg(\frac{1}{2}\.\frac{\|F\|^2}{\det F}\bigg)\.\frac{1}{\det F} \.\left[\| \xi\otimes \eta\|^2-\langle F,{\xi\otimes \eta}\rangle\.\langle F^{-1}\xi, \eta\rangle+\frac{1}{4}\.\|F\|^2\.[\langle F^{-1}\xi, \eta\rangle]^2\right]\notag\\
	&\phantom{=}\;+\psi'\bigg(\frac{1}{2}\.\frac{\|F\|^2}{\det F}\bigg)\.\frac{1}{\det F} \.\left[-\langle F, {\xi\otimes \eta}\rangle\.\langle F^{-1}\xi, \eta\rangle+\frac{3}{4}\.\|F\|^2\.[\langle F^{-1}\xi, \eta\rangle]^2 \right]+f''(\det F)\.(\det F)^2\.[\.\langle F^{-1}\xi, \eta\rangle]^2\notag\\[0.5em]
	&=\psi''\bigg(\frac{1}{2}\.\frac{\|F\|^2}{\det F}\bigg)\.\frac{1}{(\det F)^2} \.[\langle F,{\xi\otimes \eta}\rangle -\frac{1}{2}\.\|F\|^2\langle F^{-T},{\xi\otimes \eta}\rangle]^2\\
	&\phantom{=}\;+\psi'\bigg(\frac{1}{2}\.\frac{\|F\|^2}{\det F}\bigg)\.\frac{1}{\det F} \.\left[\| \xi\otimes \eta\|^2-2\.\langle F,{\xi\otimes \eta}\rangle\.\langle F^{-T},{\xi\otimes \eta}\rangle+\|F\|^2\.[\langle F^{-T},{\xi\otimes \eta}\rangle]^2 \right]+f''(\det F)\.(\det F)^2\.[\.\langle F^{-T},{\xi\otimes \eta}\rangle]^2\notag
	.
\end{align}
At this point, it seems unfeasible to continue with the approach outlined above. In particular, methods based on direct computations in terms of matrix-valued arguments instead of a singular value based representation appear to be highly unsuited for drawing any further conclusions regarding the rank-one convexity of $\Wiso$ and $\Wvol$, respectively.
\end{appendix}
\end{document}